\newcommand{\dataset}{{\cal D}}
\newcommand{\fracpartial}[2]{\frac{\partial #1}{\partial  #2}}
\newcommand{\ud}{\mathrm{d}}
\newtheorem{assumption}{Assumption~}
\begin{document}

\title{Can speed up the convergence rate of stochastic gradient methods to $\mathcal{O}(1/k^2)$ by a gradient averaging strategy?}

%\title{An accelerated stochastic gradient method with convergence rate $O(1/k^2)$ for strongly convex stochastic optimization}

\author{\name Xin Xu \email xu.permenant@gmail.com\\
    \name Xiaopeng Luo \email luo.permenant@gmail.com\\
	\addr Department of Chemistry\\
	Princeton University\\
	Princeton, NJ 08544, USA\\
	\addr Department of Control and Systems Engineering\\
	School of Management and Engineering\\
    Nanjing University\\
	Nanjing, 210093, China
    %\AND
    %\name Liang Liu \email mf1915058@smail.nju.edu.cn\\
    %\addr Department of Control and Systems Engineering\\
	%School of Management and Engineering\\
    %Nanjing University\\
	%Nanjing, 210093, China
}

%arXiv
\editor{}

\maketitle

\begin{abstract}%   <- trailing '%' for backward compatibility of .sty file
 In this paper we consider the question of whether it is possible to apply a gradient averaging strategy to improve on the sublinear convergence rates without any increase in storage. Our analysis reveals that a positive answer requires an appropriate averaging strategy and iterations that satisfy the variance dominant condition. As an interesting fact, we show that if the iterative variance we defined is always dominant even a little bit in the stochastic gradient iterations, the proposed gradient averaging strategy can increase the convergence rate $\mathcal{O}(1/k)$ to $\mathcal{O}(1/k^2)$ in probability for the strongly convex objectives with Lipschitz gradients. This conclusion suggests how we should control the stochastic gradient iterations to improve the rate of convergence.
\end{abstract}

\begin{keywords}
  Stochastic optimization, Stochastic gradient, Convergence rate, Speed up, Strongly convex
\end{keywords}

\section{Introduction}
\label{AM:s1}

In this paper we consider the question of whether it is possible to apply a gradient averaging strategy to improve on the sublinear convergence rates without any increase in storage for stochastic gradient (SG) methods. The SG method is the popular methodology \citep{ZinkevichM2003A_SG,ZhangT2004A_SG,BottouL2007A_TradeoffsLearning,
NemirovskiA2009A_StochasticProgramming,ShwartzS2011A_subgradientSVM} for solving the following class of stochastic optimization problems:
\begin{equation}\label{AM:eq:OP}
  x_*=\arg\min_{x\in\mathbb{R}^d}F(x),
\end{equation}
where the real-valued function $F$ is defined by
\begin{equation}\label{AM:eq:f}
  F(x):=\mathbb{E}_\xi\big[f(x,\xi)\big]=\int_\Xi f(x,\xi)\ud P(\xi),
\end{equation}
and $\{f(\cdot,\xi),\xi\in\Xi\}$ can be defined as a collection of real-valued functions with a certain probability distribution $P$ over the index set $\Xi$. 

With an initial point $x_1$, these methods are characterized by the iteration
\begin{equation}\label{AM:eq:GD}
  x_{k+1}=x_k-\alpha_kg(x_k,\xi_k),
\end{equation}
where $\alpha_k>0$ is the stepsize and $g(x_k,\xi_k)$ is the stochastic gradient defined by
\begin{equation}\label{AM:eq:s}
\begin{aligned}
  g(x_k,\xi_k)=\left\{
  \begin{array}{c}
    \nabla f(x_k,\xi_k), \\[0.5em]
    \frac{1}{n_k}\sum_{i=1}^{n_k}\nabla f(x_k,\xi_{k,i}),
  \end{array}\right.
\end{aligned}
\end{equation}
which is usually assumed to be an unbiased estimate of the socalled full gradient $\nabla F(x_k)$, i.e., $\mathbb{E}_{\xi_k}g(x_k,\xi_k)=\nabla F(x_k)$ \citep{ShapiroA2009M_StochasticProgramming,BottouL2018R_SGD}. The SG method was originally developed by \cite{RobbinsH1951A_SG} for smooth stochastic approximation problems. It is guaranteed to achieve the sublinear convergence rate $\mathcal{O}(1/k)$ for strongly convex objectives \citep{NemirovskiA2009A_StochasticProgramming,
BottouL2018R_SGD} and this theoretical rate is also supported by practical experience \citep{BottouL2018R_SGD}. In particular, the practical performance of stochastic gradient methods with momentum has made them popular in the community working on training DNNs \citep{SutskeverI2013_DNN&SGM}; in fact, the approach could be viewed as a gradient averaging strategy. While momentum can lead to improved practical performance, it is still not known to lead to a faster convergence rate. Usually, the gradient averaging strategy and its variants can improve the constants in the convergence rate \citep{XiaoL2010A_GradientAveraging}, it does not improve on the sublinear convergence rates for SG methods. However, owing to the successful practical performance of gradient averaging, it is worth considering whether it is possible to improve the convergence rate, which forms the starting point of this work.

The primary contribution of this work is to show that under the variance dominant condition (Assumption \ref{AM:ass:A3}), the proposed gradient averaging strategy could improve the convergence rate $\mathcal{O}(1/k)$ to $\mathcal{O}(1/k^2)$ in probability without any increase in storage for the strongly convex objectives with Lipschitz gradients. This result also suggests how we should control the stochastic gradient iterations to improve the rate of convergence in practice. In particular, our averaging strategy coordinates the relationship between the mean and variance of the increment of the iteration, so that the growth of expectation can be controlled when the variance is reduced.

\subsection{Related Work}
\label{AM:s11}

We briefly review several methods related to the new strategy, mainly including stochastic gradient with momentum (SGM), gradient averaging, stochastic variance reduced gradient (SVRG), SAGA and iterate averaging.

\textbf{SGM.} With an initial point $x_1$, two scalar sequences $\{\alpha_k\}$ and $\{\beta_k\}$ that are either predetermined or set dynamically, and $x_0:=x_1$, SGM uses iterations of the form \citep{TsengP1988A_SGmomentum,BottouL2018R_SGD}
\begin{equation*}
  x_{k+1}=x_k-\alpha_kg(x_k,\xi_k)+\beta_k(x_k-x_{k-1}).
\end{equation*}
They are procedures in which each step is chosen as a combination of the stochastic gradient direction and the most recent iterate displacement. It is common to set $\alpha_k=\alpha$ and $\beta_k=\beta$ as some fixed constants, and in this case we can rewrite the SGM iteration as
\begin{equation}\label{AM:eq:SGM}
  x_{k+1}=x_k-\alpha\sum_{i=1}^k\beta^{k-i}g(x_i,\xi_i)
  =x_k-\frac{\alpha'_k}{\sum_{i=1}^k\beta^{k-i}}
  \sum_{i=1}^k\beta^{k-i}g(x_i,\xi_i).
\end{equation}
So it is clear that SGM is a weighted average of all previous stochastic gradient directions. In deterministic settings, it is referred to as the heavy ball method \citep{PolyakB1964A_momentum}. While SGM can lead to improved practical performance, it is still not known to lead to a faster convergence rate. Moreover, see Remark \ref{AM:rem:SGM} in Section \ref{AM:s4} for a variance analysis of SGM.

\textbf{Gradient Averaging.} Similar to SGM, gradient averaging is using the average of all previous gradients,
\begin{equation}\label{AM:eq:GA}
  x_{k+1}=x_k-\frac{\alpha_k}{k}\sum_{i=1}^kg(x_i,\xi_i).
\end{equation}
This approach is used in the dual averaging method \citep{NesterovY2009A_dualSG}. Compared with our new strategy, this method reduces the variance to a similar order $\mathcal{O}(1/k)$ without considering the stepsize $\alpha_k$, however, its expectation is not well controlled, for details see Remark \ref{AM:rem:GA} in Section \ref{AM:s4}. So as mentioned above, it can improve the constants in the convergence rate \citep{XiaoL2010A_GradientAveraging} but does not improve on the sublinear convergence rates.

\textbf{SVRG.} SVRG is designed to minimize the objective function of the form of a finite sum \citep{JohnsonR2013_SVRG}, i.e., 
\begin{equation*}
  F(x)=\frac{1}{n}\sum_{i=1}^nf_i(x).
\end{equation*}
The method is able to achieve a linear convergence rate for strongly convex problems, i.e.,
\begin{equation*}
  \mathbb{E}[F(x_{k+1})-F_*]\leqslant\rho\mathbb{E}[F(x_k)-F_*].
\end{equation*}
SVRG needs to compute the batch gradients $\nabla F(x_k)$ and has two parameters that needs to be set: besides the stepsize $\alpha$, there is an additional parameter $m$, namely the number of iterations per inner loop. In order to guarantee a linear convergence theoretically, one needs to choose $\alpha$ and $m$ such that
\begin{equation*}
  \rho:=\frac{1}{1-2\alpha L}
  \left(\frac{1}{m\alpha l}+2\alpha L\right)<1,
\end{equation*}
where $l$ and $L$ are given in Assumption \ref{AM:ass:A1}. Without explicit knowledge $l$ and $L$, the lengths of the inner loop $m$ and the stepsize $\alpha$ are typically both chosen by experimentation. 
This improved rate is achieved by either an increase in computation or an increase in storage. Hence, SVRG usually can not beat SG for very large $n$ \citep{BottouL2018R_SGD}.

\textbf{SAGA.} SAGA has its origins in the stochastic average gradient (SAG) algorithm \citep{LeRouxN2012A_FiniteSumsSG,
SchmidtM2017A_FiniteSumsSG}; moreover, the SAG algorithm is a randomized version of the incremental aggregated gradient (IAG) method proposed in \cite{BlattD2007A_IAG} and analyzed in \cite{GurbuzbalabanM2017A_IAG}. Compared with SVRG, SAGA is to apply an iteration that is closer in form to SG in that it does not compute batch gradients except possibly at the initial point, and SAGA has a practical advantage that there is only one parameter (the stepsize $\alpha$) to tune instead of two. Beyond its initialization phase, the per-iteration cost of SAGA is the same as in a SG method; but it has been shown that the method can also achieve a linear rate of convergence for strongly convex problems \citep{DefazioA2014A_SAGA}. However, the price paid to reach this rate is the need to store $n$ stochastic gradient vectors for general cases except logistic and least squares regression \citep{BottouL2018R_SGD}, which would be prohibitive in many large-scale applications.

\textbf{Iterate Averaging.} Rather than averaging the gradients, some authors propose to perform the basic SG iteration and try to use an average over iterates as the final estimator \citep{PolyakB1991A_SGIG,
PolyakB1992A_SGIG}. Since SG generates noisy iterate sequences that tend to oscillate around minimizers during the optimization process, the iterate averaging would possess less noisy behavior \citep{BottouL2018R_SGD}. It is shown that suitable iterate averaging strategies obtain an $\mathcal{O}(1/k)$ rate for strongly convex problems even for non-smooth objectives \citep{HazanE2014A_SGIG,
RakhlinA2012A_SGIG}. However, none of these methods improve on the sublinear convergence rates $\mathcal{O}(1/k)$ \citep{SchmidtM2017A_FiniteSumsSG}.

\subsection{Paper Organization}

The next section introduces the assumptions we used for establishing convergence results, especially, the variance dominant condition ( Assumption \ref{AM:ass:A3}). Then the new strategy is discussed in detail in Section \ref{AM:s3}. In Section \ref{AM:s4}, we show that under the variance dominant condition, the proposed strategy could increase the convergence rate $\mathcal{O}(1/k)$ to $\mathcal{O}(1/k^2)$ in probability for the strongly convex objectives with Lipschitz gradients, which suggests how we should control the stochastic gradient iterations to improve the rate of convergence. And we draw some conclusions in Section \ref{AM:s5}.

\section{Assumptions}
\label{AM:s2}

\subsection{Assumption on the objective}

First, let us begin with a basic assumption of smoothness of the objective function. Such an assumption is essential for convergence analyses of most gradient-based methods \citep{BottouL2018R_SGD}.

\begin{assumption}[Strongly convex objectives with Lipschitz-continuous gradients]\label{AM:ass:A1}
The objective function $F:\mathbb{R}^d\to\mathbb{R}$ is continuously differentiable and there exist $0<l\leqslant L<\infty$ such that, for all $x',x\in\mathbb{R}^d$,
\begin{align}
  \|\nabla F(x')-\nabla F(x)\|_2\leqslant L\|x'-x\|_2
  ~~\textrm{and}~~~~ \label{AM:eq:A1A}\\
  F(x')\geqslant F(x)+\nabla F(x)^\mathrm{T}(x'-x)+\frac{l}{2}\|x'-x\|_2^2. \label{AM:eq:A1B}
\end{align}
\end{assumption}

The inequality \eqref{AM:eq:A1A} ensures that the gradient of the objective $F$ is bounded and does not change arbitrarily quickly with respect to the parameter vector, which implies that
\begin{equation}\label{AM:eq:A1C}
  |F(x')-F(x)-\nabla F(x)^\mathrm{T}(x'-x)|\leqslant
  \frac{L}{2}\|x'-x\|_2^2~~\textrm{for all}~~x',x\in\mathbb{R}^d.
\end{equation}
This inequality \eqref{AM:eq:A1C} is an important basis for performing so-called mean-variance analyses for stochastic iterative sequences \citep{BottouL2018R_SGD,LuoX2019A_SGFD}. The inequality \eqref{AM:eq:A1B} is called a strong convexity condition, which is often used to ensure a sublinear convergence for the stochastic gradient methods; and the role of strong sonvexity may be essential for such rates of convergence \citep{NemirovskiA2009A_StochasticProgramming,BottouL2018R_SGD}. Under the strong sonvexity assumption, the gap between the value of the objective and the minima can be bounded by the squared $\ell_2$-norm of the gradient of the objective:
\begin{equation}\label{AM:eq:convexity}
  2l(F(x)-F_*)\leqslant\|\nabla F(x)\|_2^2
  ~~\textrm{for all}~~x\in\mathbb{R}^d.
\end{equation}
This is referred to as the Polyak-{\L}ojasiewicz inequality which was originally introduced by \cite{PolyakB1963A_Gradient}. It is a sufficient condition for gradient descent to achieve a linear convergence rate; and it is also a special case of the {\L}ojasiewicz inequality proposed in the same year \citep{LojasiewiczS1963A_PolyakGradient}, which gives an upper bound for the distance of a point to the nearest zero of a given real analytic function.

\subsection{Assumption on the variance}

We follow \cite{BottouL2018R_SGD} to make the following assumption about the variance of stochastic gradients, i.e., $g(x_k,\xi_k)$. It states that the variance of $g(x_k,\xi_k)$ is restricted in a relatively minor manner.
\begin{assumption}[Variance limit]\label{AM:ass:A2}
The objective function $F$ and the stochastic gradient $g(x_k,\xi_k)$ satisfy there exist scalars $M>0$ and $M_V>0$ such that, for all $k\in\mathbb{N}$,
\begin{equation}\label{AM:eq:A2}
  \mathbb{V}_{\xi_k}[g(x_k,\xi_k)]\leqslant M+M_V\|\nabla F(x_k)\|_2^2.
\end{equation}
\end{assumption}

\subsection{Assumption on the iteration}

Now we make the following variance dominant assumption. It states that the iterative variance $\mathbb{V}[x_j-x_i]$ is always dominant even a little bit in the stochastic gradient iterations. This assumption guarantees that the proposed strategy could achieve the convergence rate $\mathcal{O}(1/k^2)$ in probability for the strongly convex objectives.
\begin{assumption}[Variance dominant]\label{AM:ass:A3}
The sequence of iterates $\{x_k\}_{k\in\mathbb{N}}$ satisfy for all $1\leqslant j\leqslant i$, there is a fixed $\kappa>0$ (could be arbitrarily small) such that
\begin{equation}\label{AM:eq:A3}
  \|\mathbb{E}[x_j-x_i]\|_2^2=\mathcal{O}
  \big(j^{-\kappa}\mathbb{V}[x_j-x_i]\big),
\end{equation}
where $\mathbb{E}[\cdot]$ denotes the historical expectation operator defined as $\mathbb{E}[\cdot]:=\mathbb{E}_{\xi_1}\mathbb{E}_{\xi_2} \cdots\mathbb{E}_{\xi_i}[\cdot]$ and the variance $\mathbb{V}[x_j-x_i]$ is defined as 
\begin{equation*}
  \mathbb{V}[x_j-x_i]:=\mathbb{E}[\|x_j-x_i\|_2^2]
  -\|\mathbb{E}[x_j-x_i]\|_2^2.
\end{equation*}
\end{assumption}

When $\mathbb{V}[x_j-x_k]>0$, then $\|\mathbb{E}[x_j-x_k]\|_2^2$ is strictly less than $\mathbb{E}[\|x_j-x_k\|_2^2]$. And obviously, Assumption \ref{AM:ass:A3} implies that
\begin{equation}\label{AM:eq:A31}
  \|\mathbb{E}[x_j-x_i]\|_2^2=\mathcal{O}
  \big(j^{-\kappa}\mathbb{E}[\|x_j-x_i\|_2^2]\big)
\end{equation}
and 
\begin{equation*}
  \mathbb{V}[x_j-x_i]=\mathcal{O} (\mathbb{E}[\|x_j-x_i\|_2^2]).
\end{equation*}

\section{Algorithms}
\label{AM:s3}

\subsection{Methods}

Our accelerated method is procedures in which each step is chosen as a weighted average of all historical stochastic gradients. And specifically, with an initial point $x_1$, the method is characterized by the iteration
\begin{equation}\label{AM:eq:AM}
  x_{k+1}\leftarrow x_k+\alpha_km_k,
\end{equation}
where the weighted average direction
\begin{equation}\label{AM:eq:AMm}
  m_k=-\frac{1}{\sum_{i=1}^ki^p}\sum_{j=1}^kj^pg(x_j,\xi_j),~~p>0.
\end{equation}
Here, $m_k$ is the weighted average of past gradients and the values of $p$ mean different weighting methods. A larger value of $p$ makes us focus on more recent gradients, as shown in Figure \ref{AM:fig:1}; and the recommended weighting method is to choose $p=20$, which uses about the nearest $20\%$ historical gradients.

\begin{figure}[tbhp]
\centering
\subfigure{\includegraphics[width=0.45\textwidth]{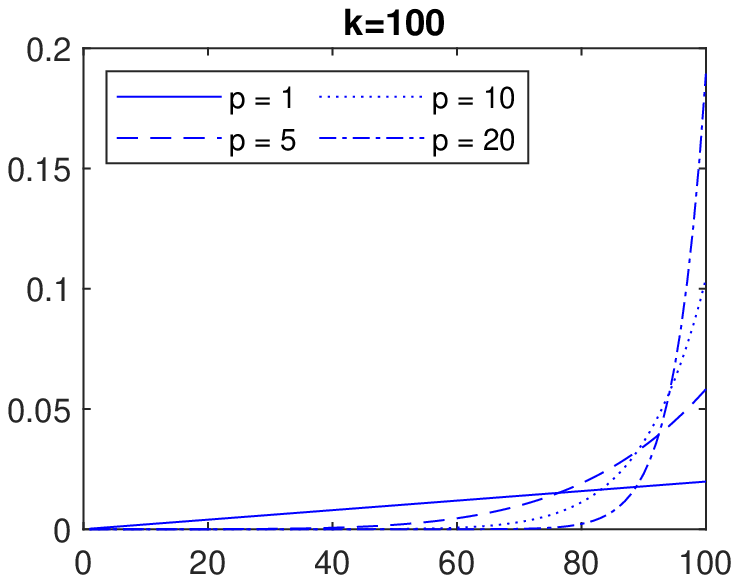}}
\subfigure{\includegraphics[width=0.45\textwidth]{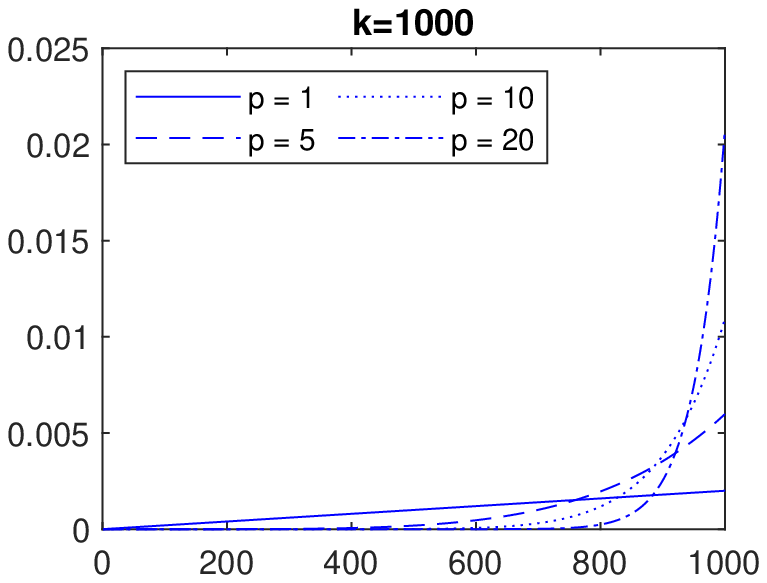}}
\caption{Illustration of the weight coefficients $\{w_j\}_{j=1}^k$ with different values of $p$ for $k=10^2$ and $10^3$, where the coefficient $w_j=\frac{j^p}{\sum_{i=1}^ki^p}$ for $j=1,\cdots,k$ and $p>0$.}
\label{AM:fig:1}
\end{figure}

Moreover, the method \eqref{AM:eq:AM} can be equivalently rewritten by the iteration
\begin{equation}\label{AM:eq:AM2}
  x_{k+1}\leftarrow x_k+\alpha_k
  \frac{v_k}{\sum_{i=1}^k\big(\frac{i}{k}\big)^p},
\end{equation}
where the direction vector $v_k$ is recursively defined as
\begin{equation*}
  v_k=\Big(\frac{k-1}{k}\Big)^pv_{k-1}-g(x_k,\xi_k)
  =-\sum_{i=1}^k\Big(\frac{i}{k}\Big)^pg(x_i,\xi_i),
\end{equation*}
which could be viewed as the classical stochastic gradient with momentum $v_k=\gamma v_{k-1}-g(x_k,\xi_k)$ where the decay factor $\gamma=\big(\frac{k-1}{k}\big)^p$ depends on $k$.

We now define our accelerated method as Algorithm \ref{AM:alg:AM}. The algorithm presumes that three computational tools exist: (i) a mechanism for generating a realization of random variable $\xi_k$ (with $\{\xi_k\}$ representing a sequence of jointly independent random variables); (ii) given an iteration number $k\in\mathbb{N}$, a mechanism for computing a scalar stepsize $\alpha_k>0$; and (iii) given an iterate $x_k\in\mathbb{R}^d$ and the realization of $\xi_k$, a mechanism for computing a stochastic vector $v_k\in\mathbb{R}^d$ and a scalar $\beta_k$.

\begin{algorithm}
\caption{Accelerated Stochastic Gradient Method}
\label{AM:alg:AM}
\begin{algorithmic}[1]
\STATE{Choose an initial iterate $x_1$.}
\FOR{$k=1,2,\cdots$}
\STATE{Generate a realization of the random variable $\xi_k$.}
\STATE{Choose a stepsize $\alpha_k>0$.}
\STATE{Compute a stochastic vector $g(x_k,\xi_k)$.}
\STATE{Update $v_k=\big(\frac{k-1}{k}\big)^pv_{k-1}-g(x_k,\xi_k)$ and $\beta_k=\sum_{i=1}^k(\frac{i}{k})^p$.}
\STATE{Set the new iterate as $x_{k+1}=x_k+\frac{\alpha_k}{\beta_k}v_k$.}
\ENDFOR
\end{algorithmic}
\end{algorithm}

\subsection{Stepsize Policy}

For strongly convex objectives, we consider the stepsize sequence $\{\alpha_k\}$ taking the form
\begin{equation}\label{AM:eq:stepsize}
  \alpha_k=\frac{s}{k+\sigma}~~\textrm{for some}~~s>\frac{4}{l}~~
  \textrm{and}~~\sigma>0~~\textrm{such that}~~
  \alpha_1\leqslant\frac{1}{LM_{G,p}^{(1)}};
\end{equation}
where the constant $M_{G,p}^{(k)}$ will be discussed in Lemma \ref{AM:lem:EmB}.

Notice that the accelerated method and the stochastic gradient method are exactly the same in the first iteration. So we assume, without loss of generality, that the first $k$ iterations $\{x_j\}_{j=1}^k$ generated by \eqref{AM:eq:AM} has the sublinear convergence rate under Assumptions \ref{AM:ass:A1} and \ref{AM:ass:A2}, that is, for every $1\leqslant j\leqslant k$, we have
\begin{equation}\label{AM:eq:CRf}
  \mathbb{E}[F(x_j)]-F_*=\mathcal{O}(1/j);
\end{equation}
then we shall prove by induction on $k$ that the accelerated method maintains the sublinear convergence rate $\mathcal{O}(1/k)$ under Assumptions \ref{AM:ass:A1} and \ref{AM:ass:A2}; and furthermore, we shall also prove that this method can achieve a convergence rate $\mathcal{O}(1/k^2)$ under Assumptions \ref{AM:ass:A1} to \ref{AM:ass:A3}.

It follows from \eqref{AM:eq:CRf} and Assumption \ref{AM:ass:A1} that
\begin{equation}\label{AM:eq:CRx}
  \mathbb{E}[\|x_j-x_*\|_2^2]\leqslant2l^{-1}
  (\mathbb{E}[F(x_j)]-F_*)=\mathcal{O}\big(j^{-1}\big).
\end{equation}
Since $1\leqslant j\leqslant k$, it follows from \eqref{AM:eq:CRx} that
\begin{align*}
  \mathbb{E}[\|x_j-x_k\|_2^2]\leqslant&
  \mathbb{E}[\|x_j-x_*\|_2+\|x_k-x_*\|_2]^2 \\
  =&\mathbb{E}[\|x_j-x_*\|_2^2+2\|x_j-x_*\|_2\|x_k-x_*\|_2
  +\|x_k-x_*\|_2^2] \\
  \leqslant&2\mathbb{E}[\|x_j-x_*\|_2^2+\|x_k-x_*\|_2^2]
  =\mathcal{O}\big(j^{-1}\big),
\end{align*}
and then, we obtain
\begin{equation}\label{AM:eq:delta}
  \|\mathbb{E}[x_j-x_k]\|_2^2\leqslant\mathbb{E}[\|x_j-x_k\|_2^2]
  =\mathcal{O}\big(j^{-1}\big).
\end{equation}
Together with Assumption \ref{AM:ass:A3}, we further obtain
\begin{equation}\label{AM:eq:delta2}
  \|\mathbb{E}[x_j-x_k]\|_2^2=\mathcal{O}
  \big(j^{-\kappa}\mathbb{E}[\|x_j-x_k\|_2^2]\big)
  =\mathcal{O}\big(j^{-1-\kappa}\big).
\end{equation}
And we will finally show that \eqref{AM:eq:delta2} implies actually $\|\mathbb{E}[x_j-x_k]\|_2^2=\mathcal{O}(j^{-2})$ in Section \ref{AM:s4}.

On the basis of \eqref{AM:eq:delta}, \eqref{AM:eq:delta2} and the stepsize policy \eqref{AM:eq:stepsize}, we first prove two Lemmas which are necessary for the following convergence analysis.
\begin{lemma}\label{AM:lem:mdecay1}
Under the conditions of \eqref{AM:eq:CRf}, suppose that the sequence of iterates $\{x_j\}_{j=1}^k$ is generated by \eqref{AM:eq:AM} with a stepsize sequence $\{\alpha_k\}$ taking the form \eqref{AM:eq:stepsize}. Then, there is $D'_p<\infty$ such that for any given diagonal matrix $\Lambda= \mathrm{diag}(\lambda_1,\cdots,\lambda_d)$ with $\lambda_i\in [-L,L]$, the inequality
\begin{equation}\label{AM:eq:mdecay1}
  \frac{1}{\sum_{i=1}^ki^p}
  \bigg\|\sum_{j=1}^kj^p\Lambda(x_j-x_k)\bigg\|_2
  \leqslant\frac{\sqrt{\alpha_k}LD'_p}{2}
\end{equation}
holds in probability.
\end{lemma}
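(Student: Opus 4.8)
The plan is to bound the weighted sum $\frac{1}{\sum_i i^p}\bigl\|\sum_{j=1}^k j^p \Lambda(x_j-x_k)\bigr\|_2$ by first splitting each term $x_j - x_k$ into its mean $\mathbb{E}[x_j-x_k]$ and fluctuation $(x_j-x_k)-\mathbb{E}[x_j-x_k]$. Since $\Lambda$ has operator norm at most $L$, we have $\|\Lambda(x_j-x_k)\|_2 \leqslant L\|x_j-x_k\|_2$, so by the triangle inequality the left side is at most
\begin{equation*}
  \frac{L}{\sum_{i=1}^k i^p}\sum_{j=1}^k j^p \|x_j-x_k\|_2.
\end{equation*}
The idea is then to control $\sum_j j^p\|x_j-x_k\|_2$ in probability using the moment bounds already established: from \eqref{AM:eq:delta} we have $\mathbb{E}[\|x_j-x_k\|_2^2]=\mathcal{O}(j^{-1})$, and combined with \eqref{AM:eq:delta2} the mean part contributes at an even smaller order. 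I would convert the $L^2$ bound into a high-probability (``in probability'') statement, e.g.\ by a Markov/Chebyshev argument applied to $\|x_j-x_k\|_2$, using that $\mathbb{E}\|x_j-x_k\|_2 \leqslant (\mathbb{E}\|x_j-x_k\|_2^2)^{1/2}=\mathcal{O}(j^{-1/2})$.

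Next I would carry out the weighted sum estimate. Using $\sum_{i=1}^k i^p \sim \frac{k^{p+1}}{p+1}$ and $\|x_j-x_k\|_2 = \mathcal{O}(j^{-1/2})$ (in probability), the numerator behaves like $\sum_{j=1}^k j^p \cdot j^{-1/2} = \sum_{j=1}^k j^{p-1/2} \sim \frac{k^{p+1/2}}{p+1/2}$, so the ratio is $\mathcal{O}(k^{p+1/2}/k^{p+1}) = \mathcal{O}(k^{-1/2})$. Since the stepsize policy \eqref{AM:eq:stepsize} gives $\alpha_k = s/(k+\sigma) = \Theta(k^{-1})$, i.e.\ $\sqrt{\alpha_k} = \Theta(k^{-1/2})$, the bound $\mathcal{O}(k^{-1/2})$ matches $\sqrt{\alpha_k}$ up to a constant. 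Tracking that constant carefully — it will involve $L$, the constant in \eqref{AM:eq:delta}, the ratio $s$ from the stepsize, and the constants $\frac{p+1}{p+1/2}$ from the two asymptotic sums — yields a finite $D'_p$ (depending only on $p$ and the problem constants) for which $\frac{1}{\sum_i i^p}\|\sum_j j^p\Lambda(x_j-x_k)\|_2 \leqslant \frac{\sqrt{\alpha_k}LD'_p}{2}$ holds in probability.

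The main obstacle, I expect, is making the ``in probability'' claim precise and uniform over $j$. The moment bounds \eqref{AM:eq:delta}–\eqref{AM:eq:delta2} are statements in expectation, and to pull a single high-probability event out of them for the whole weighted sum (rather than termwise) requires either a union-bound-style argument that does not lose too much, or a direct second-moment computation of the full random variable $\sum_j j^p\|x_j-x_k\|_2$ and one application of Markov's inequality to that aggregate. The latter is cleaner: compute $\mathbb{E}\bigl[\sum_j j^p\|x_j-x_k\|_2\bigr] \leqslant \sum_j j^p (\mathbb{E}\|x_j-x_k\|_2^2)^{1/2} = \mathcal{O}(k^{p+1/2})$ by Jensen, then Markov gives that this sum is $\mathcal{O}(k^{p+1/2})$ with probability tending to one, which after dividing by $\sum_i i^p = \Theta(k^{p+1})$ is the desired $\mathcal{O}(\sqrt{\alpha_k})$. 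A secondary technical point is handling the finitely many small $j$ (where the asymptotic equivalences degrade) and the dependence of $\Lambda$ on nothing random — since the bound must hold for \emph{every} admissible $\Lambda$ simultaneously, one uses $\|\Lambda v\|_2 \leqslant L\|v\|_2$ at the outset so that $\Lambda$ drops out before any probabilistic estimate is invoked, which removes that difficulty entirely.
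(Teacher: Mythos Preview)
Your argument is correct for this lemma, but it follows a genuinely different route from the paper. The paper works directly with the \emph{vector} sum $\sum_j j^p\Lambda(x_j-x_k)$ and performs a mean--variance split on it: the mean is $\mathcal{O}(k^{p+1/2})$ via \eqref{AM:eq:delta}, while the variance is computed (using an additivity step) as $\mathcal{O}(k^{2p})$, so the standard deviation is $\mathcal{O}(k^p)$; Chebyshev then gives the claim. You instead apply the triangle inequality \emph{before} any probabilistic estimate, reducing to the scalar aggregate $\sum_j j^p\|x_j-x_k\|_2$, bound its expectation by $\mathcal{O}(k^{p+1/2})$ via Jensen and \eqref{AM:eq:delta}, and finish with a single Markov inequality.

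Your route is more elementary and has the pleasant feature that $\Lambda$ disappears immediately, so uniformity in $\Lambda$ is automatic. The cost is that you lose the information that the random fluctuation is of strictly smaller order ($k^p$) than the mean ($k^{p+1/2}$). That separation is exactly what the paper exploits in the companion Lemma~\ref{AM:lem:mdecay2}: under Assumption~\ref{AM:ass:A3} the mean improves to $\mathcal{O}(k^{p+(1-\kappa)/2})$, and because the variance bound \eqref{AM:eq:localV} is unchanged, the whole sum improves. Your triangle-inequality step bounds $\|x_j-x_k\|_2$, whose expectation is still only $\mathcal{O}(j^{-1/2})$ regardless of Assumption~\ref{AM:ass:A3}, so your argument as written would not extend to Lemma~\ref{AM:lem:mdecay2} without reverting to a mean--fluctuation decomposition. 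For the present lemma, however, your proof is sound.
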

\begin{proof}
Note that for any $a\in\mathbb{R}$,
\begin{equation}\label{AM:eq:localOrder}
  \mathcal{O}\bigg(\sum_{i=1}^ki^a\bigg)=\int_0^kt^a\ud t=\frac{k^{a+1}}{a+1},
\end{equation}
together with \eqref{AM:eq:stepsize}, we have
$\sqrt{\alpha_k}\sum_{i=1}^ki^p=\mathcal{O}\big(k^{p+\frac{1}{2}}\big)$; thus, to prove \eqref{AM:eq:mdecay1}, we only need to show that
\begin{equation*}
  \bigg\|\sum_{j=1}^kj^p\Lambda(x_j-x_k)\bigg\|_2
  =\mathcal{O}\big(k^{p+\frac{1}{2}}\big)
\end{equation*}
holds in probability. Using the mean-variance analysis, it follows from $\|\mathbb{E}[x_j-x_k]\|_\infty\leqslant\|\mathbb{E}[x_j-x_k]\|_2$ and \eqref{AM:eq:delta} that
\begin{equation*}
  \mathbb{E}\bigg[\sum_{j=1}^kj^p\Lambda(x_j-x_k)\bigg]
  =\Lambda\sum_{j=1}^kj^{p}~\mathbb{E}[x_j-x_k]
  =\mathcal{O}\bigg(\Lambda\sum_{j=1}^kj^{p-\frac{1}{2}}\bigg)
  =\mathcal{O}\big(k^{p+\frac{1}{2}}\big);
\end{equation*}
meanwhile, according to \eqref{AM:eq:delta}, we also have
\begin{equation}\label{AM:eq:localV}
  \mathbb{V}\bigg[\sum_{j=1}^kj^p\Lambda(x_j-x_k)\bigg]
  =\Lambda^2\sum_{j=1}^kj^{2p}~\mathbb{V}[x_j-x_k]
  \leqslant\Lambda^2\sum_{j=1}^kj^{2p-1}=\mathcal{O}(k^{2p}).
\end{equation}
Using Chebyshev's inequality, there is a $C>0$ such that for $\epsilon>0$,
\begin{equation*}
  \mathbb{P}\bigg(\bigg\|\sum_{j=1}^kj^p\Lambda(x_j-x_k)
  -Ck^{p+\frac{1}{2}}\Lambda\bigg\|_2\geqslant\epsilon
  k^p\Lambda\bigg)\leqslant\frac{1}{\epsilon^2},
\end{equation*}
which gives the inequality \eqref{AM:eq:mdecay1} in probability.
\end{proof}

Under Assumption \ref{AM:ass:A3}, it is clear that \eqref{AM:eq:mdecay1} could be further strengthened.
\begin{lemma}\label{AM:lem:mdecay2}
Suppose the conditions of Lemma \ref{AM:lem:mdecay1} and Assumption \ref{AM:ass:A3} hold. Then, there is $D_p<\infty$ such that for any given diagonal matrix $\Lambda= \mathrm{diag}(\lambda_1,\cdots,\lambda_d)$ with $\lambda_i\in [-L,L]$, the inequality
\begin{equation}\label{AM:eq:mdecay2}
  \frac{1}{\sum_{i=1}^ki^p}
  \bigg\|\sum_{j=1}^kj^p\Lambda(x_j-x_k)\bigg\|_2
  \leqslant\frac{\alpha_k^sLD_p}{2}
\end{equation}
holds in probability, where $s=\min\big(1,\frac{1+\kappa}{2}\big)$.
\end{lemma}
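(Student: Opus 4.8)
The plan is to rerun the mean--variance argument from the proof of Lemma \ref{AM:lem:mdecay1}, the only change being that the crude expectation estimate \eqref{AM:eq:delta} gets replaced by the sharper one \eqref{AM:eq:delta2} that Assumption \ref{AM:ass:A3} provides. Abbreviate $Y_k:=\sum_{j=1}^kj^p\Lambda(x_j-x_k)$. Since $\sum_{i=1}^ki^p$ is of order $k^{p+1}$ by \eqref{AM:eq:localOrder} and $\alpha_k=\Theta(k^{-1})$ by \eqref{AM:eq:stepsize}, proving \eqref{AM:eq:mdecay2} reduces to showing that $\|Y_k\|_2=\mathcal{O}(k^{\,p+1-s})$ holds in probability, where $s=\min(1,(1+\kappa)/2)$; note that for $\kappa=0$ this is exactly the $\mathcal{O}(k^{p+1/2})$ bound established inside the proof of Lemma \ref{AM:lem:mdecay1}, so the present lemma is its strict refinement.

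First I would estimate $\mathbb{E}[Y_k]$. From \eqref{AM:eq:delta2} we have $\|\mathbb{E}[x_j-x_k]\|_2=\mathcal{O}(j^{-(1+\kappa)/2})$, hence, using $|\lambda_i|\leqslant L$, $\|\mathbb{E}[Y_k]\|_2\leqslant L\sum_{j=1}^kj^p\|\mathbb{E}[x_j-x_k]\|_2=\mathcal{O}\big(\sum_{j=1}^kj^{\,p-(1+\kappa)/2}\big)$. By \eqref{AM:eq:localOrder} this last sum is $\mathcal{O}(k^{\,p+(1-\kappa)/2})$ when $p>(\kappa-1)/2$, and is at most $\mathcal{O}(\log k)=\mathcal{O}(k^p)$ when $p\leqslant(\kappa-1)/2$ (which forces $\kappa>1$). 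I then split into two regimes. If $\kappa\geqslant 1$, then $s=1$, $p+1-s=p$, and in either sub-case $\|\mathbb{E}[Y_k]\|_2=\mathcal{O}(k^p)$ because $(1-\kappa)/2\leqslant 0$; if $\kappa<1$, then $(\kappa-1)/2<0<p$, so $s=(1+\kappa)/2$, $p+1-s=p+(1-\kappa)/2$, and $\|\mathbb{E}[Y_k]\|_2=\mathcal{O}(k^{\,p+(1-\kappa)/2})=\mathcal{O}(k^{\,p+1-s})$. In all cases $\|\mathbb{E}[Y_k]\|_2=\mathcal{O}(k^{\,p+1-s})$.

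Next, the variance term is inherited verbatim from \eqref{AM:eq:localV}: by the joint independence of $\{\xi_j\}$, $\mathbb{V}[Y_k]=\Lambda^2\sum_{j=1}^kj^{2p}\mathbb{V}[x_j-x_k]\leqslant\Lambda^2\sum_{j=1}^kj^{2p-1}=\mathcal{O}(k^{2p})$, so the standard deviation of $Y_k$ is $\mathcal{O}(k^p)$, and since $s\leqslant 1$ this is also $\mathcal{O}(k^{\,p+1-s})$. Combining the mean and the fluctuation through Chebyshev's inequality exactly as at the end of the proof of Lemma \ref{AM:lem:mdecay1} --- there is $C>0$ such that $\mathbb{P}\big(\|Y_k-Ck^{\,p+1-s}\Lambda\|_2\geqslant\epsilon k^p\Lambda\big)\leqslant 1/\epsilon^2$ for every $\epsilon>0$ --- yields $\|Y_k\|_2=\mathcal{O}(k^{\,p+1-s})$ in probability. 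Dividing by $\sum_{i=1}^ki^p=\Theta(k^{p+1})$ and absorbing $L$, $C$ and the implied constants into a single $D_p<\infty$ then gives \eqref{AM:eq:mdecay2}.

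The work here is bookkeeping rather than anything conceptually new, and the one point to watch is that it is the expectation term --- not the variance term, which is literally the one from Lemma \ref{AM:lem:mdecay1} --- that is sharpened by Assumption \ref{AM:ass:A3}, and that after sharpening the exponent of $k$ is precisely $p+1-s$ in each of the regimes $\kappa\geqslant 1$ and $\kappa<1$. The borderline exponent $p-(1+\kappa)/2=-1$ produces only a stray $\log k$ factor in the sum, which is harmless since it occurs solely when $\kappa>1$ (hence $s=1$) where $k^p$ dominates $\log k$ with room to spare; so no separate case analysis is really needed for it. The other mild technicality is the informal vector-valued use of Chebyshev's inequality, which I would handle in the same notationally loose way as Lemma \ref{AM:lem:mdecay1} does.
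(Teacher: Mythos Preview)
Your proof is correct and follows essentially the same mean--variance argument as the paper: both reduce \eqref{AM:eq:mdecay2} to showing $\|Y_k\|_2=\mathcal{O}(k^{p+1-s})$ in probability, sharpen the expectation bound via \eqref{AM:eq:delta2} to $\mathcal{O}(k^{p+(1-\kappa)/2})$, reuse the variance bound \eqref{AM:eq:localV}, and finish with Chebyshev. If anything, your treatment of the two regimes $\kappa<1$ and $\kappa\geqslant1$ (including the harmless $\log k$ borderline) is more explicit than the paper's one-line remark that ``when $\kappa\geqslant1$, the variance will become the principal part.''
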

\begin{proof}
Note that \eqref{AM:eq:localOrder}, we have $\alpha_k^s\sum_{i=1}^ki^p= \mathcal{O}\big(k^{p+1-s}\big)$; thus, to prove \eqref{AM:eq:mdecay2}, we only need to show that
\begin{equation*}
  \bigg\|\sum_{j=1}^kj^p\Lambda(x_j-x_k)\bigg\|_2
  =\mathcal{O}\big(k^{p+1-s}\big)
\end{equation*}
holds in probability. First, it follows from $\|\mathbb{E}[x_j-x_k]\|_\infty\leqslant\|\mathbb{E}[x_j-x_k]\|_2$ and \eqref{AM:eq:delta2} that
\begin{equation*}
  \mathbb{E}\bigg[\sum_{j=1}^kj^p\Lambda(x_j-x_k)\bigg]
  =\Lambda\sum_{j=1}^kj^{p}~\mathbb{E}[x_j-x_k]
  =\mathcal{O}\bigg(\Lambda\sum_{j=1}^kj^{p-\frac{1+\kappa}{2}}\bigg)
  =\mathcal{O}\big(k^{p+\frac{1-\kappa}{2}}\big);
\end{equation*}
together with \eqref{AM:eq:localV} and using Chebyshev's inequality, there is a $C>0$ such that
\begin{equation*}
  \mathbb{P}\bigg(\bigg\|\sum_{j=1}^kj^p\Lambda(x_j-x_k)
  -Ck^{p+\frac{1-\kappa}{2}}\Lambda\bigg\|_2
  \geqslant\epsilon Vk^p\Lambda\bigg)\leqslant\frac{1}{\epsilon^2}.
\end{equation*}
It is worth noting that when $\kappa\geqslant1$, the variance will become the principal part; so the proof is complete.
\end{proof}

\section{Convergence Results}
\label{AM:s4}

\subsection{Mean-Variance Framework}

The mean-variance framework can be described as a fundamental lemma for any iteration based on random steps, which relies only on Assumption \ref{AM:ass:A1} and is a slight generalization of Lemma 4.2 in \cite{BottouL2018R_SGD}.

\begin{lemma}\label{AM:lem:MVF}
Under Assumption \ref{AM:ass:A1}, if for every $k\in\mathbb{N}$, $\xi_k$ is any random vector independent of $x_k$ and $s(x_k,\xi_k)$ is a stochastic step depending on $\xi_k$, then the iteration
\begin{equation*}
  x_{k+1}=x_k+s(x_k,\xi_k)
\end{equation*}
satisfy the following inequality
\begin{align*}
  \mathbb{E}_{\xi_k}[F(x_{k+1})]-F(x_k)
  \leqslant&\nabla F(x_k)^\mathrm{T}\mathbb{E}_{\xi_k}[s(x_k,\xi_k)]
  +\frac{L}{2}\|\mathbb{E}_{\xi_k}[s(x_k,\xi_k)]\|_2^2
  +\frac{L}{2}\mathbb{V}_{\xi_k}[s(x_k,\xi_k)],
\end{align*}
where the variance of $s(x_k,\xi_k)$ is defined as
\begin{equation}\label{AM:eq:V}
  \mathbb{V}_{\xi_k}[s(x_k,\xi_k)]:=
  \mathbb{E}_{\xi_k}[\|s(x_k,\xi_k)\|_2^2]
  -\|\mathbb{E}_{\xi_k}[s(x_k,\xi_k)]\|_2^2.
\end{equation}
\end{lemma}
\begin{proof}
According to the inequality \eqref{AM:eq:A1C}, the iteration $x_{k+1}=x_k+s(x_k,\xi_k)$ satisfy
\begin{align*}
  F(x_{k+1})-F(x_k)\leqslant&\nabla F(x_k)^\mathrm{T}(x_{k+1}-x_k)
  +\frac{L}{2}\|x_{k+1}-x_k\|_2^2 \\
  \leqslant&\nabla F(x_k)^\mathrm{T}s(x_k,\xi_k)
  +\frac{L}{2}\|s(x_k,\xi_k)\|_2^2.
\end{align*}
Noting that $\xi_k$ is independent of $x_k$ and taking expectations in these inequalities with respect to the distribution of $\xi_k$, we obtain
\begin{equation*}
  \mathbb{E}_{\xi_k}[F(x_{k+1})]-F(x_k)\leqslant\nabla F(x_k)^\mathrm{T}\mathbb{E}_{\xi_k}[s(x_k,\xi_k)]
  +\frac{L}{2}\mathbb{E}_{\xi_k}[\|s(x_k,\xi_k)\|_2^2].
\end{equation*}
Recalling \eqref{AM:eq:V}, we finally get the desired bound.
\end{proof}

Regardless of the states before $x_k$, the expected decrease in the objective function yielded by the $k$th stochastic step $s(x_k,\xi_k)$, say, $\mathbb{E}_{\xi_k}[F(x_{k+1})]-F(x_k)$, could be bounded above by a quantity involving the expectation $\mathbb{E}_{\xi_k}[s(x_k,\xi_k)]$ and variance $\mathbb{V}_{\xi_k}[s(x_k,\xi_k)]$.

\subsection{Expectation Analysis}

Now we will analyze the mean of $m_k$ to get the bounds of $\nabla F(x_k)^\mathrm{T}\mathbb{E}_{\xi_k}[m_k]$ and $\|\mathbb{E}_{\xi_k}[m_k]\|_2^2$, where $\mathbb{E}[m_k]=\mathbb{E}_{\xi_1} \cdots\mathbb{E}_{\xi_k}[m_k]$ is the historical expectation of $m_k$. First, according to the definition \eqref{AM:eq:AMm} of the weighted average direction $m_k$, we have
\begin{equation*}
  \mathbb{E}[m_k]=-\frac{1}{\sum_{i=1}^ki^p}
  \sum_{j=1}^kj^p\mathbb{E}_{\xi_j}[g(x_j,\xi_j)]
  =-\frac{1}{\sum_{i=1}^ki^p}\sum_{j=1}^kj^p\nabla F(x_j).
\end{equation*}
Further, from Assumption \ref{AM:ass:A1}, we have
\begin{equation*}
  \|\nabla F(x_j)-\nabla F(x_k)\|_\infty\leqslant
  \|\nabla F(x_j)-\nabla F(x_k)\|_2\leqslant L\|x_j-x_k\|_2,
\end{equation*}
then there is a diagonal matrix $\Lambda=\mathrm{diag}(\lambda_1,\cdots, \lambda_d)$ with $\lambda_i\in [-L,L]$ such that
\begin{equation}\label{AM:eq:AMDj}
  \nabla F(x_j)=\nabla F(x_k)+\Lambda(x_j-x_k).
\end{equation}
Therefore, $\mathbb{E}[m_k]$ could be written as
\begin{equation}\label{AM:eq:AMm2}
  \mathbb{E}[m_k]=-\nabla F(x_k)+
  \frac{1}{\sum_{i=1}^ki^p}\sum_{j=1}^kj^p\Lambda(x_k-x_j),
\end{equation}
where $\Lambda=\mathrm{diag}(\lambda_1,\cdots,\lambda_d)$ is a diagonal matrix  with $\lambda_i\in [-L,L]$.

Together with Lemma \ref{AM:lem:mdecay1}, we get the following bounds of $\nabla F(x_k)^\mathrm{T}\mathbb{E}[m_k]$ and $\|\mathbb{E}[m_k]\|_2^2$.
\begin{theorem}\label{AM:thm:momentum}
Suppose the conditions of Lemma \ref{AM:lem:mdecay1} hold. Then for every $k\in\mathbb{N}$, the following conditions
\begin{align}
  \|\mathbb{E}[m_k]\|_2\leqslant&\|\nabla F(x_k)\|_2+
  \frac{\sqrt{\alpha_k}LD'_p}{2}
  ~~\textrm{and}~~\label{AM:eq:mstep1} \\
  \nabla F(x_k)^\mathrm{T}\mathbb{E}[m_k]
  \leqslant&-\|\nabla F(x_k)\|_2^2+\frac{\sqrt{\alpha_k}LD'_p}{2}
  \|\nabla F(x_k)\|_2 \label{AM:eq:mstep2}
\end{align}
hold in probability.
\end{theorem}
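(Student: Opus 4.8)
The plan is to read both inequalities off directly from the decomposition \eqref{AM:eq:AMm2} of $\mathbb{E}[m_k]$ together with the decay estimate of Lemma \ref{AM:lem:mdecay1}. Write $\mathbb{E}[m_k]=-\nabla F(x_k)+e_k$, where $e_k:=\frac{1}{\sum_{i=1}^k i^p}\sum_{j=1}^k j^p\Lambda(x_k-x_j)$ and $\Lambda=\mathrm{diag}(\lambda_1,\dots,\lambda_d)$ with $\lambda_i\in[-L,L]$ is the matrix furnished by the Lipschitz linearization \eqref{AM:eq:AMDj}. Since $\Lambda(x_k-x_j)=-\Lambda(x_j-x_k)$, Lemma \ref{AM:lem:mdecay1} applies verbatim and yields $\|e_k\|_2\leqslant\frac{\sqrt{\alpha_k}LD'_p}{2}$ in probability. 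All randomness here sits in the iterates $x_1,\dots,x_k$, so this is precisely the event on which the two claimed bounds will be established.

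For \eqref{AM:eq:mstep1} I would simply invoke the triangle inequality: $\|\mathbb{E}[m_k]\|_2\leqslant\|\nabla F(x_k)\|_2+\|e_k\|_2\leqslant\|\nabla F(x_k)\|_2+\frac{\sqrt{\alpha_k}LD'_p}{2}$. For \eqref{AM:eq:mstep2} I would take the inner product of $\mathbb{E}[m_k]$ with $\nabla F(x_k)$ and bound the cross term by Cauchy--Schwarz: $\nabla F(x_k)^\mathrm{T}\mathbb{E}[m_k]=-\|\nabla F(x_k)\|_2^2+\nabla F(x_k)^\mathrm{T}e_k\leqslant-\|\nabla F(x_k)\|_2^2+\|\nabla F(x_k)\|_2\,\|e_k\|_2$, and then substitute the norm bound on $e_k$. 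Both displayed inequalities are deterministic consequences of the single estimate $\|e_k\|_2\leqslant\frac{\sqrt{\alpha_k}LD'_p}{2}$, so they hold jointly on the same event.

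There is essentially no hard step; the only care needed is bookkeeping for the ``in probability'' qualifier. I would fix one event of probability at least $1-1/\epsilon^2$ (coming from the Chebyshev bound in the proof of Lemma \ref{AM:lem:mdecay1}) on which $\|e_k\|_2\leqslant\frac{\sqrt{\alpha_k}LD'_p}{2}$, and note that on that event \eqref{AM:eq:mstep1} and \eqref{AM:eq:mstep2} follow simultaneously. It is also worth flagging that \eqref{AM:eq:AMm2} has already absorbed Assumption \ref{AM:ass:A1} into $\Lambda$ via \eqref{AM:eq:AMDj}, so no further structural hypothesis on $F$ is invoked in this argument beyond Lipschitz continuity of $\nabla F$.
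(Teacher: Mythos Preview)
Your proposal is correct and matches the paper's own proof essentially line for line: the paper writes $\mathbb{E}[m_k]=-\nabla F(x_k)+R$ with $R$ exactly your $e_k$, bounds $\|R\|_2$ via Lemma~\ref{AM:lem:mdecay1}, and then applies the triangle inequality for \eqref{AM:eq:mstep1} and Cauchy--Schwarz for \eqref{AM:eq:mstep2}. Your added remarks on the sign flip and on the ``in probability'' bookkeeping are accurate refinements that the paper leaves implicit.
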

\begin{remark}\label{AM:rem:GA}
For the gradient averaging method mentioned \eqref{AM:eq:GA} in Subsection \ref{AM:s11},  it is using the average of all previous gradients,
\begin{equation*}
  x_{k+1}=x_k+\alpha_km'_k,
\end{equation*}
where 
\begin{equation*}
  m'_k=-\frac{1}{k}\sum_{i=1}^kg(x_i,\xi_i).
\end{equation*}
By \eqref{AM:eq:AMDj}, the historical expectation of $m'_k$ could be written as
\begin{equation*}
  \mathbb{E}[m'_k]=-\frac{1}{k}\sum_{j=1}^k
  \mathbb{E}_{\xi_j}[g(x_j,\xi_j)]
  =-\frac{1}{k}\sum_{j=1}^k\nabla F(x_j)=-\nabla F(x_k)+R',
\end{equation*}
where 
\begin{equation*}
  R'=\frac{1}{k}\sum_{j=1}^k\Lambda(x_k-x_j).
\end{equation*}
And the bound of $R'$ is $\frac{L}{k}\sum_{j=1}^kj^{-1/2}$, which decays slower than $k^{-1/2}$, i.e., $\mathcal{O}(\sqrt{\alpha_k})$ described in Theorem \ref{AM:thm:momentum}.
\end{remark}
\begin{proof}
According to \eqref{AM:eq:AMm2} and Lemma \ref{AM:lem:mdecay1}, it follows that
\begin{equation*}
  \mathbb{E}[m_k]=-\nabla F(x_k)+R,
\end{equation*}
where the vector $R=\frac{1}{\sum_{i=1}^ki^p}\sum_{j=1}^kj^p\Lambda(x_k-x_j)$ and $\|R\|_2\leqslant\frac{\sqrt{\alpha_k}LD'_p}{2}$. Thus, one obtains \eqref{AM:eq:mstep1} and \eqref{AM:eq:mstep2} by noting that
\begin{equation*}
  \|\mathbb{E}[m_k]\|_2\leqslant\|\nabla F(x_k)\|_2+\|R\|_2
\end{equation*}
and
\begin{equation*}
  \nabla F(x_k)^\mathrm{T}\mathbb{E}[m_k]=
  -\|\nabla F(x_k)\|_2^2+\nabla F(x_k)^\mathrm{T}R
  \leqslant-\|\nabla F(x_k)\|_2^2+\|\nabla F(x_k)\|_2\|R\|_2,
\end{equation*}
so the proof is complete.
\end{proof}

Under Assumption \ref{AM:ass:A3}, both \eqref{AM:eq:mstep1} and \eqref{AM:eq:mstep2} can be further improved.
\begin{theorem}\label{AM:thm:momentum2}
Suppose the conditions of Lemma \ref{AM:lem:mdecay2} hold. Then for every $k\in\mathbb{N}$, the following conditions
\begin{align}
  \|\mathbb{E}[m_k]\|_2\leqslant&\|\nabla F(x_k)\|_2+\frac{\alpha_k^sLD_p}{2}
  ~~\textrm{and}~~\label{AM:eq:mstep3} \\
  \nabla F(x_k)^\mathrm{T}\mathbb{E}[m_k]
  \leqslant&-\|\nabla F(x_k)\|_2^2+\frac{\alpha_k^sLD_p}{2}
  \|\nabla F(x_k)\|_2 \label{AM:eq:mstep4}
\end{align}
hold in probability, where $s=\min\big(1,\frac{1+\kappa}{2}\big)$.
\end{theorem}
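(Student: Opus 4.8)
The plan is to follow verbatim the structure of the proof of Theorem~\ref{AM:thm:momentum}, substituting the sharper estimate of Lemma~\ref{AM:lem:mdecay2} for that of Lemma~\ref{AM:lem:mdecay1}. First I would recall the exact expression \eqref{AM:eq:AMm2} for the historical expectation of the averaged direction, namely
\[
  \mathbb{E}[m_k] = -\nabla F(x_k) + R, \qquad R := \frac{1}{\sum_{i=1}^k i^p}\sum_{j=1}^k j^p \Lambda(x_k-x_j),
\]
which was derived purely from Assumption~\ref{AM:ass:A1} and the unbiasedness of $g(x_j,\xi_j)$ (via \eqref{AM:eq:AMDj}), and hence still holds under the present hypotheses. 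The point is that the residual $R$ is exactly the quantity whose norm Lemma~\ref{AM:lem:mdecay2} controls.

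Next, invoking Lemma~\ref{AM:lem:mdecay2} --- whose hypotheses are precisely the hypotheses of the present theorem (the conditions of Lemma~\ref{AM:lem:mdecay1} together with Assumption~\ref{AM:ass:A3}) --- I obtain $\|R\|_2 \leqslant \tfrac{1}{2}\alpha_k^s L D_p$ in probability, with $s = \min\bigl(1,\tfrac{1+\kappa}{2}\bigr)$. The bound \eqref{AM:eq:mstep3} then follows from the triangle inequality $\|\mathbb{E}[m_k]\|_2 \leqslant \|\nabla F(x_k)\|_2 + \|R\|_2$, and \eqref{AM:eq:mstep4} follows by writing $\nabla F(x_k)^{\mathrm{T}}\mathbb{E}[m_k] = -\|\nabla F(x_k)\|_2^2 + \nabla F(x_k)^{\mathrm{T}} R$ and bounding the cross term by the Cauchy--Schwarz inequality, $\nabla F(x_k)^{\mathrm{T}} R \leqslant \|\nabla F(x_k)\|_2 \|R\|_2$.

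Since the analytic content has already been discharged in Lemma~\ref{AM:lem:mdecay2}, there is essentially no obstacle here: the theorem is a corollary of that lemma combined with the decomposition \eqref{AM:eq:AMm2}. The only points requiring a little care are the probabilistic bookkeeping --- both \eqref{AM:eq:mstep3} and \eqref{AM:eq:mstep4} hold only ``in probability'' because they inherit the probabilistic estimate of Lemma~\ref{AM:lem:mdecay2} --- and the observation that the exponent saturates at $s=1$ once $\kappa \geqslant 1$, reflecting the remark at the end of the proof of Lemma~\ref{AM:lem:mdecay2} that beyond that threshold the variance, rather than the mean, becomes the dominant term, so no further improvement of the mean estimate is available from this argument.
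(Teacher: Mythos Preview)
Your proposal is correct and follows essentially the same approach as the paper: the paper's own proof simply invokes Lemma~\ref{AM:lem:mdecay2} to bound the residual term and then refers back to the proof of Theorem~\ref{AM:thm:momentum} for the triangle-inequality and Cauchy--Schwarz steps, exactly as you outline. Your additional remarks on the probabilistic qualification and the saturation at $s=1$ are accurate and already implicit in Lemma~\ref{AM:lem:mdecay2}.
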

\begin{proof}
According to Lemma \ref{AM:lem:mdecay2}, we have $\frac{1}{\sum_{i=1}^ki^p}\|\sum_{j=1}^kj^p\Lambda(x_k-x_j)\|_2 \leqslant\frac{\alpha_k^sLD_p}{2}$, and
the desired results could be proved in the same way as the proof of Theorem \ref{AM:thm:momentum}.
\end{proof}

\subsection{Variance Analysis}

Now we will analyze the variance of $m_k$ to get the bound of $\mathbb{V}[m_k]$. As an important result, we will show that the variance of $m_k$ tends to zero with a rate $\mathcal{O}(k^{-1})$ as $k$ grows.
\begin{lemma}\label{AM:lem:momentumV}
Under Assumption \ref{AM:ass:A2}, suppose that the sequence of iterates $\{x_k\}$ is generated by \eqref{AM:eq:AM} with $\|x_i-x_j\|_2\leqslant D$ for any $i,j\in\mathbb{N}$. Then \eqref{AM:eq:stepsize}, then
\begin{equation}\label{AM:eq:momentumVC}
  \mathbb{V}[m_k]\leqslant C_p\alpha_k
  \Big(M+2M_V\|\nabla F(x_k)\|_2^2+2M_VL^2D^2\Big),
\end{equation}
where $C_p$ is positive real constant.
\end{lemma}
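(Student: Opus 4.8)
The plan is to expand $m_k = -\frac{1}{\sum_{i=1}^k i^p}\sum_{j=1}^k j^p g(x_j,\xi_j)$ and compute its historical variance by exploiting the conditional independence of the $\xi_j$'s. First I would recall the definition $\mathbb{V}[m_k] = \mathbb{E}[\|m_k\|_2^2] - \|\mathbb{E}[m_k]\|_2^2$, where $\mathbb{E}[\cdot] = \mathbb{E}_{\xi_1}\cdots\mathbb{E}_{\xi_k}[\cdot]$. Writing $w_j = j^p/\sum_{i=1}^k i^p$ so that $\sum_j w_j = 1$ and $m_k = -\sum_j w_j g(x_j,\xi_j)$, the key structural fact is that, conditioned on the history up to step $j$, the ``noise part'' $g(x_j,\xi_j) - \nabla F(x_j)$ has zero mean; so when one expands the square $\|\sum_j w_j g(x_j,\xi_j)\|_2^2$ and takes the nested expectation, the cross terms between the noise at step $j$ and anything measurable with respect to $\xi_1,\dots,\xi_{j-1}$ vanish. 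This is the standard martingale-type cancellation, and it reduces $\mathbb{V}[m_k]$ to a weighted sum $\sum_{j=1}^k w_j^2 \,\mathbb{E}\big[\mathbb{V}_{\xi_j}[g(x_j,\xi_j)]\big]$ plus possibly a contribution from the variance of the $\nabla F(x_j)$ terms across the history.

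Next I would invoke Assumption \ref{AM:ass:A2}: $\mathbb{V}_{\xi_j}[g(x_j,\xi_j)] \leqslant M + M_V\|\nabla F(x_j)\|_2^2$. To get the bound in terms of $\|\nabla F(x_k)\|_2^2$ rather than $\|\nabla F(x_j)\|_2^2$, I would use Lipschitz continuity of $\nabla F$ (inequality \eqref{AM:eq:A1A}) together with the uniform diameter bound $\|x_i - x_j\|_2 \leqslant D$: writing $\|\nabla F(x_j)\|_2^2 \leqslant 2\|\nabla F(x_k)\|_2^2 + 2\|\nabla F(x_j) - \nabla F(x_k)\|_2^2 \leqslant 2\|\nabla F(x_k)\|_2^2 + 2L^2 D^2$. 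This is exactly where the three terms $M$, $2M_V\|\nabla F(x_k)\|_2^2$, and $2M_V L^2 D^2$ in \eqref{AM:eq:momentumVC} come from. It remains to control the weight factor: $\sum_{j=1}^k w_j^2 = \frac{\sum_{j=1}^k j^{2p}}{\big(\sum_{i=1}^k i^p\big)^2}$, and using the integral estimate \eqref{AM:eq:localOrder} one gets $\sum_{j=1}^k j^{2p} = \mathcal{O}(k^{2p+1})$ and $\sum_{i=1}^k i^p = \mathcal{O}(k^{p+1})$, hence $\sum_j w_j^2 = \mathcal{O}(k^{-1})$. By the stepsize policy \eqref{AM:eq:stepsize}, $\alpha_k = s/(k+\sigma) = \Theta(k^{-1})$, so $\sum_j w_j^2 \leqslant C_p \alpha_k$ for a suitable constant $C_p = C_p(p,s,\sigma)$, which closes the argument.

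The main obstacle I anticipate is making the cancellation of cross terms fully rigorous in the nested-expectation sense, since $x_j$ depends on $\xi_1,\dots,\xi_{j-1}$ and the weights are deterministic but the $\nabla F(x_j)$ are themselves random through the history. The clean way is to peel off the expectations from the outside in, or equivalently to condition on the natural filtration $\mathcal{F}_{j-1} = \sigma(\xi_1,\dots,\xi_{j-1})$ and note $\mathbb{E}[g(x_j,\xi_j)\mid\mathcal{F}_{j-1}] = \nabla F(x_j)$; one then has to be slightly careful that $\mathbb{V}[m_k]$ as defined (subtracting $\|\mathbb{E}[m_k]\|_2^2$, not the conditional mean) still collapses to the sum of per-step conditional variances — it does, because $\mathbb{V}[\sum_j w_j g(x_j,\xi_j)] = \sum_j w_j^2\,\mathbb{E}[\mathbb{V}_{\xi_j}[g(x_j,\xi_j)]] + \mathbb{V}[\sum_j w_j \nabla F(x_j)]$, and the second term is itself dominated by the diameter bound via the same Lipschitz argument (it is $\mathcal{O}(L^2D^2)$ and can be absorbed, or shown to be of lower order). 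A secondary minor point is bookkeeping the constant $C_p$ so that it is genuinely independent of $k$; this follows from the uniformity of the integral estimates but should be stated explicitly.
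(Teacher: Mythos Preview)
Your proposal is correct and follows essentially the same route as the paper: decompose $\mathbb{V}[m_k]$ into the weighted sum $\frac{1}{(\sum_i i^p)^2}\sum_j j^{2p}\,\mathbb{V}_{\xi_j}[g(x_j,\xi_j)]$, apply Assumption~\ref{AM:ass:A2}, convert $\|\nabla F(x_j)\|_2^2$ to $2\|\nabla F(x_k)\|_2^2 + 2L^2D^2$ via Lipschitz continuity and the diameter bound, and then use the integral estimate \eqref{AM:eq:localOrder} to show $\sum_j j^{2p}/(\sum_i i^p)^2 = \mathcal{O}(k^{-1}) = \mathcal{O}(\alpha_k)$. If anything you are more careful than the paper, which asserts the variance decomposition directly without discussing the martingale cross-term cancellation or the residual $\mathbb{V}\big[\sum_j w_j\nabla F(x_j)\big]$ that you explicitly flag.
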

\begin{remark}\label{AM:rem:SGM}
For the SGM method \eqref{AM:eq:SGM} mentioned in Subsection \ref{AM:s11},  it is using the weighted average of all previous gradients,
\begin{equation*}
  x_{k+1}=x_k+\alpha_km''_k,
\end{equation*}
where
\begin{equation*}
  m''_k=-\frac{1}{\sum_{i=1}^k\beta^{k-i}}
  \sum_{i=1}^k\beta^{k-i}g(x_i,\xi_i).
\end{equation*}
Since 
\begin{equation*}
  \frac{1}{\big(\sum_{i=1}^k\beta^{k-i}\big)^2}
  \sum_{j=1}^k\beta^{2(k-j)}
  =\frac{1-\beta^{2k}}{1-\beta^2}\frac{(1-\beta)^2}{(1-\beta^k)^2}
  =\frac{1-\beta}{1+\beta}\frac{1+\beta^k}{1-\beta^k},
\end{equation*}
it follows that 
\begin{equation*}
  \mathbb{V}[m''_k]=\frac{1}{\big(\sum_{i=1}^k\beta^{k-i}\big)^2}
  \sum_{j=1}^k\beta^{2(k-j)}~\mathbb{V}_{\xi_j}[g(x_j,\xi_j)].
\end{equation*}
Together with the proof below, one can find that the variance of $m''_k$ could be controlled by a by a fixed fraction $\frac{1-\beta}{1+\beta}$. 
\end{remark}
\begin{proof}
It follows from \eqref{AM:eq:AMDj} and $\|x_j-x_k\|_2\leqslant D$ that
\begin{equation*}
  \|\nabla F(x_j)\|_2\leqslant\|\nabla F(x_k)\|_2+L\|x_j-x_k\|_2
  \leqslant\|\nabla F(x_k)\|_2+LD,
\end{equation*}
together with the Arithmetic Mean Geometric Mean inequality, we have
\begin{align*}
  \|\nabla F(x_j)\|_2^2\leqslant&\Big(\|\nabla F(x_k)\|_2+LD\Big)^2 \\
  \leqslant&\|\nabla F(x_k)\|_2^2+L^2D^2+2LD\|\nabla F(x_k)\|_2 \\
  \leqslant&2\|\nabla F(x_k)\|_2^2+2L^2D^2.
\end{align*}
Hence, along with Assumption \ref{AM:ass:A2}, we obtain
\begin{align*}
  \mathbb{V}[m_k]=&\frac{1}{\big(\sum_{i=1}^ki^p\big)^2}
  \sum_{j=1}^kj^{2p}~\mathbb{V}_{\xi_j}[g(x_j,\xi_j)] \\
  \leqslant&\frac{1}{\big(\sum_{i=1}^ki^p\big)^2}
  \sum_{j=1}^kj^{2p}\Big(M+M_V\|\nabla F(x_j)\|_2^2\Big) \\
  \leqslant&\frac{\sum_{i=1}^kj^{2p}}{\big(\sum_{i=1}^ki^p\big)^2}
  \Big(M+2M_V\|\nabla F(x_k)\|_2^2+2M_VL^2D^2\Big).
\end{align*}
According to \eqref{AM:eq:localOrder}, $\sum_{j=1}^kj^{2p}=\mathcal{O} (k^{2p+1})$ and $\big(\sum_{i=1}^ki^p\big)^2=\mathcal{O}(k^{2p+2})$; therefore, we have $\mathbb{V}[m_k]=\mathcal{O}(k^{-1}) =\mathcal{O}(\alpha_k)$, and the proof is complete.
\end{proof}

Combining Theorem \ref{AM:thm:momentum} and Lemma \ref{AM:lem:momentumV}, we can obtain a bound for each iteration of the accelerated method.
\begin{lemma}\label{AM:lem:EmB}
Under the conditions of Theorem \ref{AM:thm:momentum} and Lemma \ref{AM:lem:momentumV}, suppose that the stepsize sequence $\{\alpha_k\}$ satisfies $\alpha_k\leqslant\frac{1}{L}$. Then, the inequality
\begin{equation*}
  \mathbb{E}_{\xi_k}[F(x_{k+1})]-F(x_k)\leqslant
  -\frac{3\alpha_k}{4}\|\nabla F(x_k)\|_2^2
  +\frac{\alpha_k^2LM_{G,p}^{(k)}}{2}\|\nabla F(x_k)\|_2^2
  +\frac{\alpha_k^2LM_{d,p,1}^{(k)}}{2}
\end{equation*}
holds in probability, where $M_{G,p}^{(k)}=\frac{3}{2}+2C_p\alpha_kM_V$ and $M_{d,p,1}^{(k)}=\frac{5L{D'_p}^2}{4}+C_p\alpha_k(M+2M_VL^2D^2)$; further,
\begin{equation*}
  \lim_{k\to\infty}M_{G,p}^{(k)}=\frac{3}{2}~~\textrm{and}~~
  \lim_{k\to\infty}M_{d,p,1}^{(k)}=\frac{5L{D'_p}^2}{4}.
\end{equation*}
\end{lemma}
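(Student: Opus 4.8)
The plan is to run the stochastic step $s(x_k,\xi_k)=\alpha_k m_k$ through the mean--variance framework of Lemma~\ref{AM:lem:MVF} and then replace the three terms on its right-hand side by the estimates already established. Conditioned on the history $\xi_1,\dots,\xi_{k-1}$ the vector $m_k$ is an affine function of $\xi_k$ alone, so Lemma~\ref{AM:lem:MVF} applies with $s=\alpha_k m_k$ and gives, writing $g:=\|\nabla F(x_k)\|_2$,
\begin{equation*}
  \mathbb{E}_{\xi_k}[F(x_{k+1})]-F(x_k)\leqslant
  \alpha_k\nabla F(x_k)^{\mathrm T}\mathbb{E}[m_k]
  +\frac{L\alpha_k^2}{2}\|\mathbb{E}[m_k]\|_2^2
  +\frac{L\alpha_k^2}{2}\mathbb{V}[m_k],
\end{equation*}
where on the high-probability events carried by Theorem~\ref{AM:thm:momentum} and Lemma~\ref{AM:lem:momentumV} the conditional moments of $m_k$ are replaced by the historical quantities $\mathbb{E}[m_k]$ and $\mathbb{V}[m_k]$ controlled there. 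I would then substitute \eqref{AM:eq:mstep2} into the first term, the square of \eqref{AM:eq:mstep1} (expanded, \emph{not} crudely bounded by $2a^2+2b^2$) into the second, and \eqref{AM:eq:momentumVC} into the third.

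The resulting right-hand side is a quadratic in $g$ whose only delicate pieces are the linear terms $\tfrac{\alpha_k^{3/2}LD'_p}{2}g$ coming from \eqref{AM:eq:mstep2} and $\tfrac{L^2\alpha_k^{5/2}D'_p}{2}g$ coming from expanding $\|\mathbb{E}[m_k]\|_2^2=(g+\tfrac{\sqrt{\alpha_k}LD'_p}{2})^2$. The heart of the argument is two applications of the sharp Young inequality $ab\leqslant\tfrac14a^2+b^2$: taking $a=\sqrt{\alpha_k}\,g$ and $b=\tfrac{\alpha_kLD'_p}{2}$ turns the first into $\tfrac{\alpha_k}{4}g^2+\tfrac{\alpha_k^2L^2{D'_p}^2}{4}$, which relaxes the leading $-\alpha_kg^2$ to $-\tfrac{3\alpha_k}{4}g^2$; taking $a=\sqrt{L}\,\alpha_kg$ and $b=\tfrac{L^{3/2}\alpha_k^{3/2}D'_p}{2}$ turns the second into $\tfrac{L\alpha_k^2}{4}g^2+\tfrac{L^3\alpha_k^3{D'_p}^2}{4}$. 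Collecting the coefficient of $g^2$ gives
\begin{equation*}
  -\alpha_k+\frac{\alpha_k}{4}+\frac{L\alpha_k^2}{2}+\frac{L\alpha_k^2}{4}
  +LC_pM_V\alpha_k^3
  =-\frac{3\alpha_k}{4}+\frac{L\alpha_k^2}{2}\Big(\frac{3}{2}+2C_p\alpha_kM_V\Big),
\end{equation*}
which is exactly $-\tfrac{3\alpha_k}{4}+\tfrac{\alpha_k^2LM_{G,p}^{(k)}}{2}$; collecting the remaining constants $\tfrac{\alpha_k^2L^2{D'_p}^2}{4}$, $\tfrac{L^3\alpha_k^3{D'_p}^2}{8}$, $\tfrac{L^3\alpha_k^3{D'_p}^2}{4}$ and $\tfrac{LC_p\alpha_k^3}{2}(M+2M_VL^2D^2)$, and using $\alpha_k\leqslant1/L$ to fold the $\mathcal{O}(\alpha_k^3)$ pieces into $\mathcal{O}(\alpha_k^2)$ ones, produces $\tfrac{5\alpha_k^2L^2{D'_p}^2}{8}+\tfrac{LC_p\alpha_k^3}{2}(M+2M_VL^2D^2)=\tfrac{\alpha_k^2LM_{d,p,1}^{(k)}}{2}$. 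The two limits are then immediate from $\alpha_k=s/(k+\sigma)\to0$.

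The work here is essentially bookkeeping: the whole proof hinges on choosing the two Young splits so that the $g^2$-coefficient lands precisely on $\tfrac{3}{2}$ (giving $M_{G,p}^{(k)}$) and the constants on $\tfrac{5}{4}$ (giving $M_{d,p,1}^{(k)}$), and on checking that every $\mathcal{O}(\alpha_k^3)$ leftover is absorbed using only $\alpha_k\leqslant1/L$ together with the a priori diameter bound $\|x_i-x_j\|_2\leqslant D$ inherited from the hypotheses of Lemma~\ref{AM:lem:momentumV}. The one genuinely conceptual point — and the thing I would state most carefully — is that Lemma~\ref{AM:lem:MVF} is deterministic given $x_k$ whereas Theorem~\ref{AM:thm:momentum} and Lemma~\ref{AM:lem:momentumV} hold only in probability, so the conclusion inherits exactly that ``in probability'' status and nothing stronger.
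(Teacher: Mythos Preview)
Your proposal is correct and follows essentially the same route as the paper: apply Lemma~\ref{AM:lem:MVF} with $s=\alpha_k m_k$, then substitute the bounds \eqref{AM:eq:mstep1}, \eqref{AM:eq:mstep2}, and \eqref{AM:eq:momentumVC}, using two AM--GM/Young splits to absorb the cross terms. The only cosmetic difference is that the paper applies the AM--GM inequality to $\|\mathbb{E}[m_k]\|_2^2$ \emph{before} multiplying by $\tfrac{L\alpha_k^2}{2}$ (obtaining $\tfrac{3}{2}g^2+\tfrac{3\alpha_kL^2{D'_p}^2}{4}$ first), whereas you multiply through and then split; the arithmetic is identical and both land on the same $M_{G,p}^{(k)}$ and $M_{d,p,1}^{(k)}$. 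Your closing remark on the distinction between the conditional expectation $\mathbb{E}_{\xi_k}[\cdot]$ in Lemma~\ref{AM:lem:MVF} and the historical expectation in Theorem~\ref{AM:thm:momentum} is, if anything, more careful than the paper's own treatment of that point.
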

\begin{proof}
According to \eqref{AM:eq:mstep1}, together with the Arithmetic Mean Geometric Mean inequality, one obtains
\begin{align*}
  \left\|\mathbb{E}_{\xi_k}[m_k]\right\|_2^2
  \leqslant&\bigg(\|\nabla F(x_k)\|_2+
  \frac{\sqrt{\alpha_k}LD'_p}{2}\bigg)^2 \\
  \leqslant&\|\nabla F(x_k)\|_2^2
  +\sqrt{\alpha_k}LD'_p\|\nabla F(x_k)\|_2
  +\frac{\alpha_kL^2{D'_p}^2}{4} \\
  \leqslant&\frac{3}{2}\|\nabla F(x_k)\|_2^2
  +\frac{3\alpha_kL^2{D'_p}^2}{4}.
\end{align*}
Similarly, by \eqref{AM:eq:mstep2} and the Arithmetic Mean Geometric Mean inequality, it holds that
\begin{align*}
  \nabla F(x_k)^\mathrm{T}\mathbb{E}_{\xi_k}[m_k]
  \leqslant&-\|\nabla F(x_k)\|_2^2+
  \frac{\sqrt{\alpha_k}LD'_p}{2}\|\nabla F(x_k)\|_2 \\
  \leqslant&-\frac{3}{4}\|\nabla F(x_k)\|_2^2
  +\frac{\alpha_kL^2{D'_p}^2}{4}.
\end{align*}
Finally, accoding to Lemma \ref{AM:lem:MVF}, Assumption \ref{AM:ass:A2}, and $\alpha_k\leqslant\frac{1}{L}$, the iterates satisfy
\begin{align*}
  \mathbb{E}_{\xi_k}[F(x_{k+1})]-F(x_k)\leqslant&
  \nabla F(x_k)^\mathrm{T}\mathbb{E}_{\xi_k}[\alpha_km_k]
  +\frac{L}{2}\|\mathbb{E}_{\xi_k}[\alpha_km_k]\|_2^2
  +\frac{L}{2}\mathbb{V}_{\xi_k}[\alpha_km_k] \\
  =&\alpha_k\nabla F(x_k)^\mathrm{T}\mathbb{E}_{\xi_k}[m_k]
  +\frac{\alpha_k^2L}{2}\|\mathbb{E}_{\xi_k}[m_k]\|_2^2
  +\frac{\alpha_k^2L}{2}\mathbb{V}_{\xi_k}[m_k] \\
  \leqslant&-\frac{3\alpha_k}{4}\|\nabla F(x_k)\|_2^2
  +\alpha_k^2L\left(\frac{3}{4}+\frac{C_p}{k}M_V\right)\|\nabla F(x_k)\|_2^2 \\
  &+\frac{\alpha_k^2L}{2}\bigg(\frac{5L{D'_p}^2}{4}
  +\frac{C_p}{k}(M+2M_VL^2D^2)\bigg) \\
  =&-\alpha_k\|\nabla F(x_k)\|_2^2
  +\frac{\alpha_k^2LM_{G,p}^{(k)}}{2}\|\nabla F(x_k)\|_2^2
  +\frac{\alpha_k^2LM_{d,p,1}^{(k)}}{2},
\end{align*}
and the proof is complete.
\end{proof}

From Theorem \ref{AM:thm:momentum2}, the bound we obtained could be further improved in the same way as the proof of Lemma \ref{AM:lem:EmB}.
\begin{lemma}\label{AM:lem:EmB2}
Under the conditions of Theorem \ref{AM:thm:momentum2} and Lemma \ref{AM:lem:momentumV}, suppose that the stepsize sequence $\{\alpha_k\}$ satisfies $\alpha_k\leqslant\frac{1}{L}$. Then, the inequality
\begin{equation*}
  \mathbb{E}_{\xi_k}[F(x_{k+1})]-F(x_k)\leqslant
  -\frac{3\alpha_k}{4}\|\nabla F(x_k)\|_2^2
  +\frac{\alpha_k^2LM_{G,p}^{(k)}}{2}\|\nabla F(x_k)\|_2^2
  +\frac{\alpha_k^{2+\kappa'}LM_{d,p,2}^{(k)}}{2}
\end{equation*}
holds in probability, where $\kappa'=\min(1,\kappa)$, $M_{G,p}^{(k)}$ comes from \ref{AM:lem:EmB}, and
\begin{equation*}
  M_{d,p,2}^{(k)}=\frac{5}{4}LD_p^2
  +C_p\alpha_k^{1-\kappa'}(M+2M_VL^2D^2)\leqslant
  \frac{5}{4}LD_p^2+\frac{C_p}{L^{1-\kappa'}}(M+2M_VL^2D^2).
\end{equation*}
\end{lemma}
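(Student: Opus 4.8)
The plan is to mimic the proof of Lemma~\ref{AM:lem:EmB} almost verbatim, the only substantive change being that the factor $\sqrt{\alpha_k}$ coming from Theorem~\ref{AM:thm:momentum} is replaced throughout by the sharper factor $\alpha_k^s$ coming from Theorem~\ref{AM:thm:momentum2}, where $s=\min\big(1,\frac{1+\kappa}{2}\big)$. The one arithmetic identity that produces the improvement is $2s=\min(2,1+\kappa)=1+\min(1,\kappa)=1+\kappa'$, so that every squared remainder $\alpha_k^{2s}$ becomes $\alpha_k^{1+\kappa'}$; once a further power of $\alpha_k$ is gained from the stepsize in the mean-variance expansion, this is precisely the exponent $2+\kappa'$ appearing in the statement.

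Concretely, first I would square \eqref{AM:eq:mstep3} and apply the Arithmetic Mean Geometric Mean inequality to the cross term $\alpha_k^sLD_p\|\nabla F(x_k)\|_2$ to obtain $\|\mathbb{E}_{\xi_k}[m_k]\|_2^2\le\frac{3}{2}\|\nabla F(x_k)\|_2^2+\frac{3}{4}\alpha_k^{1+\kappa'}L^2D_p^2$, and likewise from \eqref{AM:eq:mstep4} obtain $\nabla F(x_k)^\mathrm{T}\mathbb{E}_{\xi_k}[m_k]\le-\frac{3}{4}\|\nabla F(x_k)\|_2^2+\frac{1}{4}\alpha_k^{1+\kappa'}L^2D_p^2$. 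Then I would feed the stochastic step $s(x_k,\xi_k)=\alpha_km_k$ into the mean-variance framework of Lemma~\ref{AM:lem:MVF}, factor the $\alpha_k^2$ out of $\|\mathbb{E}_{\xi_k}[\alpha_km_k]\|_2^2$ and $\mathbb{V}_{\xi_k}[\alpha_km_k]$, and bound the variance term by Lemma~\ref{AM:lem:momentumV}. Collecting the terms, the coefficient of $\|\nabla F(x_k)\|_2^2$ comes out as $-\frac{3\alpha_k}{4}+\alpha_k^2L\big(\frac{3}{4}+C_p\alpha_kM_V\big)=-\frac{3\alpha_k}{4}+\frac{\alpha_k^2LM_{G,p}^{(k)}}{2}$ with the very same $M_{G,p}^{(k)}$ as in Lemma~\ref{AM:lem:EmB}, while the leftover additive error equals $\frac{1}{4}\alpha_k^{2+\kappa'}L^2D_p^2+\frac{3}{8}\alpha_k^{3+\kappa'}L^3D_p^2+\frac{C_p}{2}\alpha_k^3L\big(M+2M_VL^2D^2\big)$.

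The only step requiring a little care is folding this last expression into $\frac{\alpha_k^{2+\kappa'}LM_{d,p,2}^{(k)}}{2}$, and it is here that the hypothesis $\alpha_k\le\frac{1}{L}$ is used in its two guises: from $\alpha_kL\le1$ we get $\alpha_k^{3+\kappa'}L^3\le\alpha_k^{2+\kappa'}L^2$, which merges the two $D_p$-terms into $\frac{5}{8}\alpha_k^{2+\kappa'}L^2D_p^2=\frac{\alpha_k^{2+\kappa'}L}{2}\cdot\frac{5}{4}LD_p^2$; and writing $\alpha_k^3L=\alpha_k^{2+\kappa'}L\cdot\alpha_k^{1-\kappa'}$ with $\alpha_k^{1-\kappa'}\le L^{-(1-\kappa')}$ (legitimate because $1-\kappa'\ge0$) recasts the variance remainder as $\frac{\alpha_k^{2+\kappa'}L}{2}\cdot C_p\alpha_k^{1-\kappa'}\big(M+2M_VL^2D^2\big)$, which both identifies $M_{d,p,2}^{(k)}$ and gives its claimed uniform upper bound. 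The conceptual point, inherited unchanged from Lemma~\ref{AM:lem:EmB}, is that Lemma~\ref{AM:lem:momentumV} already supplies one factor $\alpha_k$, so the $\|\nabla F(x_k)\|_2^2$-part of the variance can be absorbed only into $M_{G,p}^{(k)}$ at order $\alpha_k^2$ (hence $M_{G,p}^{(k)}$ is unchanged and still tends to $\frac{3}{2}$), whereas the $M$- and $D$-part carries no gradient norm and is therefore free to be shifted down to order $\alpha_k^{2+\kappa'}$. Since every inequality invoked --- those of Theorem~\ref{AM:thm:momentum2} (hence Lemma~\ref{AM:lem:mdecay2}) and of Lemma~\ref{AM:lem:momentumV} --- holds in probability, so does the resulting bound, and I expect no obstacle beyond keeping these exponents straight.
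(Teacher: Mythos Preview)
Your proposal is correct and follows exactly the approach indicated by the paper, which simply states that the bound ``could be further improved in the same way as the proof of Lemma~\ref{AM:lem:EmB}'' using Theorem~\ref{AM:thm:momentum2} in place of Theorem~\ref{AM:thm:momentum}. You have carefully supplied all the arithmetic details the paper omits---the identity $2s=1+\kappa'$, the merging of the $D_p$-terms via $\alpha_kL\le1$, and the rewriting $\alpha_k^3=\alpha_k^{2+\kappa'}\cdot\alpha_k^{1-\kappa'}$ that identifies $M_{d,p,2}^{(k)}$ and its uniform bound---so nothing is missing.
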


\subsection{Average Behavior of Iterations}

According to Lemmas \ref{AM:lem:EmB} and \ref {AM:lem:EmB2}, it is easy to analyze the average behavior of iterations of the accelerated method for strong convex functions.
\begin{theorem}\label{AM:thm:ABm}
Under the conditions of Lemma \ref{AM:lem:EmB}, suppose that the stepsize sequence $\{\alpha_k\}$ satisfies $\alpha_k\leqslant\frac{1}{LM_{G,p}^{(1)}}$. Then, the inequality
\begin{equation*}
  \mathbb{E}[F(x_{k+1})-F_*]\leqslant
  \left[\prod_{i=1}^k\Big(1-\frac{\alpha_il}{2}\Big)\right]
  [F(x_1)-F_*]+\frac{LM_{d,p,1}^{(1)}}{2}\sum_{i=1}^k
  \alpha_i^2\prod_{j=i+1}^k\Big(1-\frac{\alpha_il}{2}\Big)
\end{equation*}
holds in probability.
\end{theorem}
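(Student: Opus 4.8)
The plan is to run a standard recursion-unrolling argument on the one-step inequality from Lemma \ref{AM:lem:EmB}, combined with the Polyak--{\L}ojasiewicz inequality \eqref{AM:eq:convexity}. First I would start from the conclusion of Lemma \ref{AM:lem:EmB}, namely that (in probability)
\[
  \mathbb{E}_{\xi_k}[F(x_{k+1})]-F(x_k)\leqslant
  -\tfrac{3\alpha_k}{4}\|\nabla F(x_k)\|_2^2
  +\tfrac{\alpha_k^2LM_{G,p}^{(k)}}{2}\|\nabla F(x_k)\|_2^2
  +\tfrac{\alpha_k^2LM_{d,p,1}^{(k)}}{2},
\]
and absorb the middle term into the first. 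Since the stepsize policy gives $\alpha_k\leqslant\alpha_1\leqslant\frac{1}{LM_{G,p}^{(1)}}$ and $M_{G,p}^{(k)}\leqslant M_{G,p}^{(1)}$ (this monotonicity should be checked from the explicit formula $M_{G,p}^{(k)}=\frac32+2C_p\alpha_kM_V$ together with $\alpha_k$ decreasing), we have $\alpha_kLM_{G,p}^{(k)}\leqslant 1$, so the coefficient of $\|\nabla F(x_k)\|_2^2$ is at most $-\tfrac{3\alpha_k}{4}+\tfrac{\alpha_k}{2}=-\tfrac{\alpha_k}{4}$. Likewise $M_{d,p,1}^{(k)}\leqslant M_{d,p,1}^{(1)}$ bounds the additive term uniformly. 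This yields
\[
  \mathbb{E}_{\xi_k}[F(x_{k+1})]-F(x_k)\leqslant
  -\tfrac{\alpha_k}{4}\|\nabla F(x_k)\|_2^2
  +\tfrac{\alpha_k^2LM_{d,p,1}^{(1)}}{2}.
\]

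Next I would invoke the Polyak--{\L}ojasiewicz inequality \eqref{AM:eq:convexity}, $\|\nabla F(x_k)\|_2^2\geqslant 2l(F(x_k)-F_*)$, to replace $-\tfrac{\alpha_k}{4}\|\nabla F(x_k)\|_2^2$ by $-\tfrac{\alpha_k l}{2}(F(x_k)-F_*)$. Subtracting $F_*$ from both sides and taking the full historical expectation $\mathbb{E}[\cdot]=\mathbb{E}_{\xi_1}\cdots\mathbb{E}_{\xi_k}[\cdot]$ (using the tower property, since $\xi_k$ is independent of $x_k$) gives the scalar recursion
\[
  \mathbb{E}[F(x_{k+1})-F_*]\leqslant
  \Big(1-\tfrac{\alpha_k l}{2}\Big)\mathbb{E}[F(x_k)-F_*]
  +\tfrac{\alpha_k^2LM_{d,p,1}^{(1)}}{2}.
\]
Note $1-\frac{\alpha_k l}{2}\in(0,1)$ because $\alpha_k l\leqslant\alpha_1 l\leqslant \frac{l}{LM_{G,p}^{(1)}}\leqslant\frac{l}{L}\cdot\frac{2}{3}<2$ (here one uses $M_{G,p}^{(1)}\geqslant\frac32$ and $l\leqslant L$), so the inductive unrolling is legitimate.

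Finally I would unroll this linear recursion from $k$ down to $1$: iterating it $k$ times produces the homogeneous term $\big[\prod_{i=1}^k(1-\tfrac{\alpha_i l}{2})\big][F(x_1)-F_*]$ and the sum $\tfrac{LM_{d,p,1}^{(1)}}{2}\sum_{i=1}^k\alpha_i^2\prod_{j=i+1}^k(1-\tfrac{\alpha_j l}{2})$, which is exactly the claimed bound (the paper writes $\prod_{j=i+1}^k(1-\frac{\alpha_i l}{2})$, evidently a typo for $\alpha_j$). I do not expect any serious obstacle here; the argument is the classical SG contraction-recursion template (cf. Lemma 4.2 / Theorem 4.6 in \cite{BottouL2018R_SGD}). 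The only genuinely delicate point is bookkeeping the ``in probability'' qualifier: the one-step bound of Lemma \ref{AM:lem:EmB} holds only on an event of controlled probability (coming from the Chebyshev step in Lemma \ref{AM:lem:mdecay1}), so strictly one should note that the finitely-many events for $i=1,\dots,k$ can be intersected and the argument carried out on that intersection, which is the mild sense in which the whole statement is ``in probability.'' Beyond that, verifying the two monotonicity facts $M_{G,p}^{(k)}\leqslant M_{G,p}^{(1)}$ and $M_{d,p,1}^{(k)}\leqslant M_{d,p,1}^{(1)}$ from their closed forms is routine.
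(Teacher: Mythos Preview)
Your proposal is correct and follows essentially the same route as the paper's own proof: combine the one-step bound of Lemma~\ref{AM:lem:EmB} with $\alpha_k L M_{G,p}^{(k)}\leqslant 1$ to absorb the middle term, apply the Polyak--{\L}ojasiewicz inequality~\eqref{AM:eq:convexity}, take historical expectations, and unroll the resulting linear recursion. Your extra remarks (explicitly checking $M_{G,p}^{(k)}\leqslant M_{G,p}^{(1)}$ and $M_{d,p,1}^{(k)}\leqslant M_{d,p,1}^{(1)}$, verifying $1-\tfrac{\alpha_k l}{2}\in(0,1)$, flagging the $\alpha_i$/$\alpha_j$ typo, and commenting on the ``in probability'' bookkeeping) go slightly beyond what the paper spells out but are consistent with it.
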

\begin{proof}
According to Lemma \ref{AM:lem:EmB} and $0<\alpha_k\leqslant \frac{1}{LM_{G,p}^{(1)}}\leqslant\frac{1}{LM_{G,p}^{(k)}}$, we have
\begin{align*}
  \mathbb{E}_{\xi_k}[F(x_{k+1})]\leqslant&
  F(x_k)-\frac{3\alpha_k}{4}\|\nabla F(x_k)\|_2^2
  +\frac{\alpha_k^2LM_{G,p}^{(k)}}{2}\|\nabla F(x_k)\|_2^2
  +\frac{\alpha_k^2LM_{d,p,1}^{(k)}}{2} \\
  \leqslant&F(x_k)-\frac{\alpha_k}{4}\|\nabla F(x_k)\|_2^2
  +\frac{\alpha_k^2LM_{d,p,1}^{(k)}}{2}.
\end{align*}
Subtracting $F_*$ from both sides and applying \eqref{AM:eq:convexity}, this yields
\begin{align*}
  \mathbb{E}_{\xi_k}[F(x_{k+1})]-F_*
  \leqslant&F(x_k)-F_*-\frac{\alpha_k}{4}\|\nabla F(x_k)\|_2^2
  +\frac{\alpha_k^2LM_{d,p,1}^{(k)}}{2} \\
  \leqslant&F(x_k)-F_*-\frac{\alpha_kl}{2}\big(F(x_k)-F_*\big)
  +\frac{\alpha_k^2LM_{d,p,1}^{(k)}}{2} \\
  =&\Big(1-\frac{\alpha_kl}{2}\Big)\big(F(x_k)-F_*\big)
  +\frac{\alpha_k^2LM_{d,p,1}^{(k)}}{2},
\end{align*}
and it follows from taking historical expectations that
\begin{align*}
  \mathbb{E}[F(x_{k+1})-F_*]\leqslant&\Big(1-\frac{\alpha_kl}{2}\Big)
  \mathbb{E}[F(x_k)-F_*]+\frac{\alpha_k^2LM_{d,p,1}^{(k)}}{2} \\
  \leqslant&\Big(1-\frac{\alpha_kl}{2}\Big)
  \mathbb{E}[F(x_k)-F_*]+\frac{\alpha_k^2LM_{d,p,1}^{(1)}}{2}.
\end{align*}
Therefore, the desired result follows by repeatedly applying this inequality above through iteration from $1$ to $k$.
\end{proof}

According to Lemma \ref{AM:lem:EmB2}, the result could be further improved.
\begin{theorem}\label{AM:thm:ABm2}
Under the conditions of Lemma \ref{AM:lem:EmB2}, suppose that the stepsize sequence $\{\alpha_k\}$ satisfies $\alpha_k\leqslant\frac{1}{LM_{G,p}^{(1)}}$. Then, the inequality
\begin{equation*}
  \mathbb{E}[F(x_{k+1})-F_*]\leqslant
  \left[\prod_{i=1}^k\Big(1-\frac{\alpha_il}{2}\Big)\right]
  [F(x_1)-F_*]+\frac{LM_{d,p,2}^{(1)}}{2}\sum_{i=1}^k
  \alpha_i^{2+\kappa'}\prod_{j=i+1}^k\Big(1-\frac{\alpha_il}{2}\Big)
\end{equation*}
holds in probability, where $\kappa'=\min(1,\kappa)$.
\end{theorem}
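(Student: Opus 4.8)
The plan is to mirror the proof of Theorem~\ref{AM:thm:ABm} almost verbatim, replacing the constant-decay term $\alpha_i^2 L M_{d,p,1}^{(1)}/2$ with the sharper term $\alpha_i^{2+\kappa'} L M_{d,p,2}^{(1)}/2$ supplied by Lemma~\ref{AM:lem:EmB2}. Concretely, I would start from the one-step bound of Lemma~\ref{AM:lem:EmB2},
\begin{equation*}
  \mathbb{E}_{\xi_k}[F(x_{k+1})]-F(x_k)\leqslant
  -\frac{3\alpha_k}{4}\|\nabla F(x_k)\|_2^2
  +\frac{\alpha_k^2LM_{G,p}^{(k)}}{2}\|\nabla F(x_k)\|_2^2
  +\frac{\alpha_k^{2+\kappa'}LM_{d,p,2}^{(k)}}{2},
\end{equation*}
and use the stepsize restriction $\alpha_k\leqslant\frac{1}{LM_{G,p}^{(1)}}\leqslant\frac{1}{LM_{G,p}^{(k)}}$ (the monotonicity in $k$ follows since $M_{G,p}^{(k)}=\frac{3}{2}+2C_p\alpha_kM_V$ is decreasing in $k$ because $\alpha_k$ is decreasing) to absorb the middle term into the first one, giving $\mathbb{E}_{\xi_k}[F(x_{k+1})]\leqslant F(x_k)-\frac{\alpha_k}{4}\|\nabla F(x_k)\|_2^2+\frac{\alpha_k^{2+\kappa'}LM_{d,p,2}^{(k)}}{2}$.

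Next I would subtract $F_*$ and apply the Polyak--{\L}ojasiewicz inequality \eqref{AM:eq:convexity}, $2l(F(x_k)-F_*)\leqslant\|\nabla F(x_k)\|_2^2$, to turn the gradient term into a contraction on the optimality gap:
\begin{equation*}
  \mathbb{E}_{\xi_k}[F(x_{k+1})]-F_*\leqslant
  \Big(1-\frac{\alpha_kl}{2}\Big)\big(F(x_k)-F_*\big)
  +\frac{\alpha_k^{2+\kappa'}LM_{d,p,2}^{(k)}}{2}.
\end{equation*}
Then taking the historical expectation $\mathbb{E}[\cdot]=\mathbb{E}_{\xi_1}\cdots\mathbb{E}_{\xi_k}[\cdot]$ and using the uniform bound $M_{d,p,2}^{(k)}\leqslant M_{d,p,2}^{(1)}$ (which is exactly the second inequality recorded in the statement of Lemma~\ref{AM:lem:EmB2}, since $M_{d,p,2}^{(k)}$ is decreasing in $k$) yields the recursion
\begin{equation*}
  \mathbb{E}[F(x_{k+1})-F_*]\leqslant
  \Big(1-\frac{\alpha_kl}{2}\Big)\mathbb{E}[F(x_k)-F_*]
  +\frac{\alpha_k^{2+\kappa'}LM_{d,p,2}^{(1)}}{2}.
\end{equation*}
Unrolling this recursion from index $1$ up to $k$ — i.e.\ repeatedly substituting the bound for $\mathbb{E}[F(x_i)-F_*]$ into itself — produces the product $\prod_{i=1}^k(1-\frac{\alpha_il}{2})$ multiplying $[F(x_1)-F_*]$ plus the telescoped sum $\sum_{i=1}^k\alpha_i^{2+\kappa'}\prod_{j=i+1}^k(1-\frac{\alpha_il}{2})$ multiplying $\frac{LM_{d,p,2}^{(1)}}{2}$, which is precisely the claimed inequality. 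The ``in probability'' qualifier is inherited directly, since Lemma~\ref{AM:lem:EmB2} already holds in probability and no further randomness-sensitive step is introduced.

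I do not expect a genuine obstacle here: the argument is structurally identical to Theorem~\ref{AM:thm:ABm}, and the only new ingredient is the exponent $2+\kappa'$ in place of $2$, which is carried along passively. The one point deserving a little care is the monotonicity claims used to replace $M_{G,p}^{(k)}$ and $M_{d,p,2}^{(k)}$ by their $k=1$ values; both reduce to the observation that $\alpha_k=\frac{s}{k+\sigma}$ is decreasing, so the correction constants (each of the form $\text{const}+\text{const}\cdot\alpha_k^{q}$ with $q>0$) are decreasing in $k$. Everything else is the standard unrolling of a linear recursion with a vanishing forcing term.
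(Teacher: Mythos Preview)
Your proposal is correct and matches the paper's approach exactly: the paper does not write out a separate proof for Theorem~\ref{AM:thm:ABm2} but simply states that the result follows from Lemma~\ref{AM:lem:EmB2} in the same way that Theorem~\ref{AM:thm:ABm} follows from Lemma~\ref{AM:lem:EmB}, and your write-up carries out precisely that substitution. One small wording quibble: the ``second inequality recorded in the statement of Lemma~\ref{AM:lem:EmB2}'' bounds $M_{d,p,2}^{(k)}$ by $\frac{5}{4}LD_p^2+\frac{C_p}{L^{1-\kappa'}}(M+2M_VL^2D^2)$, not literally by $M_{d,p,2}^{(1)}$; but your own justification via monotonicity of $\alpha_k^{1-\kappa'}$ already gives $M_{d,p,2}^{(k)}\leqslant M_{d,p,2}^{(1)}$ directly, so the argument stands.
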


\subsection{Convergence}

According to Theorems \ref{AM:thm:ABm} and \ref{AM:thm:ABm2}, the convergence of the accelerated methods is closely related to the following two limits:
\begin{equation}\label{AM:eq:limits}
  A:=\lim_{k\to\infty}A_k~~~\textrm{and}~~~
  B(a):=\lim_{k\to\infty}B_k(a),
\end{equation}
where $A_k=\prod_{i=1}^k\big(1-\frac{\alpha_il}{2}\big)$, $B_k(a)=\sum_{i=1}^k\alpha_i^{2+a}\prod_{j=i+1}^k \big(1-\frac{\alpha_il}{2}\big)$ and $0\leqslant a\leqslant1$. Therefore, the results in Theorems \ref{AM:thm:ABm} and \ref{AM:thm:ABm2} can be rewritten as
\begin{equation}\label{AM:eq:R1r}
  \mathbb{E}[F(x_{k+1})-F_*]\leqslant[F(x_1)-F_*]A_k
  +\frac{LM_{d,p,1}^{(1)}}{2}B_k(0)
\end{equation}
and
\begin{equation}\label{AM:eq:R2r}
  \mathbb{E}[F(x_{k+1})-F_*]\leqslant[F(x_1)-F_*]A_k
  +\frac{LM_{d,p,2}^{(1)}}{2}B_k(\kappa'),
\end{equation}
respectively. In the following, we will use the properties of the gamma function to analyze the asymptotic behavior of $A_k$ and $B_k(a)$.

Let $\Gamma(z)$ denote the gamma function for all $z\neq0,-1,-2,\cdots$, and let $(z)_t$ denote the Pochhammer symbol or shifted factorial $z(z+1)\cdots(z+t-1)$ for all $t=1,2,\cdots$, then we have two recursive formulas $z\Gamma(z)=\Gamma(1+z)$ and
\begin{equation}\label{AM:eq:gamma}
  (z)_t\Gamma(z)=\Gamma(t+z).
\end{equation}
And we also have the following lemma which gives the first-order asymptotic expansion of the ratio of two gamma functions:
\begin{lemma}[\cite{TricomiF1951A_GammaRatio}]
\label{AM:lem:GammaRatio}
For any $a\in\mathbb{R}$,
\begin{equation*}
  \frac{\Gamma(x+a)}{\Gamma(x)}=x^a+\mathcal{O}\left(x^{a-1}\right)
\end{equation*}
as $x\to\infty$.
\end{lemma}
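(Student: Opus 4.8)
The plan is to obtain the estimate from Stirling's asymptotic expansion of the logarithm of the gamma function, namely
\begin{equation*}
  \log\Gamma(z)=\Big(z-\tfrac12\Big)\log z-z+\tfrac12\log(2\pi)+\frac{1}{12z}+\mathcal{O}\big(z^{-3}\big)\qquad(z\to\infty,\ z>0).
\end{equation*}
Fix $a\in\mathbb{R}$ and take $x$ large enough that $x>0$ and $x+a>0$, so the expansion applies both to $\log\Gamma(x+a)$ and to $\log\Gamma(x)$. First I would subtract the two expansions: the constant $\tfrac12\log(2\pi)$ cancels, while $\tfrac1{12(x+a)}-\tfrac1{12x}$ together with the two $\mathcal{O}(z^{-3})$ tails contribute only $\mathcal{O}(x^{-2})$, leaving
\begin{equation*}
  \log\frac{\Gamma(x+a)}{\Gamma(x)}=\Big(x+a-\tfrac12\Big)\log(x+a)-\Big(x-\tfrac12\Big)\log x-a+\mathcal{O}\big(x^{-2}\big).
\end{equation*}

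The next step is to expand $\log(x+a)=\log x+\log\!\big(1+\tfrac ax\big)=\log x+\tfrac ax+\mathcal{O}\big(x^{-2}\big)$ and substitute. Writing
\begin{equation*}
  \Big(x+a-\tfrac12\Big)\log(x+a)=\Big(x+a-\tfrac12\Big)\log x+\Big(x+a-\tfrac12\Big)\tfrac ax+\Big(x+a-\tfrac12\Big)\mathcal{O}\big(x^{-2}\big),
\end{equation*}
the first term combines with $-(x-\tfrac12)\log x$ to leave exactly $a\log x$; the middle term equals $a+\mathcal{O}(x^{-1})$, which cancels the stray $-a$; and the last term is $\mathcal{O}(x^{-1})$. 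Hence $\log\big(\Gamma(x+a)/\Gamma(x)\big)=a\log x+\mathcal{O}(x^{-1})$, and exponentiating with $e^{\mathcal{O}(x^{-1})}=1+\mathcal{O}(x^{-1})$ yields
\begin{equation*}
  \frac{\Gamma(x+a)}{\Gamma(x)}=x^a\big(1+\mathcal{O}(x^{-1})\big)=x^a+\mathcal{O}\big(x^{a-1}\big),
\end{equation*}
which is the claim.

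The only genuine obstacle is careful bookkeeping of the error terms: one repeatedly multiplies quantities of size $\mathcal{O}(x^{-1})$ or $\mathcal{O}(x^{-2})$ by factors as large as $x$ or $\log x$, so each such product must be checked to remain $\mathcal{O}(x^{-1})$; the logarithmic factors cause no trouble because $\log x=\mathcal{O}(x^{\varepsilon})$ for every $\varepsilon>0$. It is also worth noting that the implied constant may be taken uniform for $a$ in any bounded set, which is all that is needed for the applications in Theorems~\ref{AM:thm:ABm} and~\ref{AM:thm:ABm2}.

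If one wishes to avoid invoking Stirling, an alternative route is to use $\Gamma(z+1)=z\Gamma(z)$ to reduce to the case $0\leqslant a<1$ and then apply Wendel's log-convexity inequality $\big(\tfrac{x}{x+a}\big)^{1-a}\leqslant\Gamma(x+a)/\big(x^a\Gamma(x)\big)\leqslant1$; since the lower bound is $1-\mathcal{O}(x^{-1})$, a squeeze already gives $\Gamma(x+a)/\big(x^a\Gamma(x)\big)=1+\mathcal{O}(x^{-1})$, and the reduction step contributes only a further factor $1+\mathcal{O}(x^{-1})$. I would present the Stirling computation above, however, since it disposes of all real $a$ in one stroke.
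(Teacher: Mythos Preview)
Your proof via Stirling's expansion is correct and entirely standard; the bookkeeping of the error terms is handled properly, and the exponentiation step $e^{\mathcal{O}(x^{-1})}=1+\mathcal{O}(x^{-1})$ is legitimate. (One small remark: the comment about $\log x=\mathcal{O}(x^{\varepsilon})$ is unnecessary here, since no step actually produces a $\log x$ multiplying an error term; the only large factor multiplying an $\mathcal{O}(x^{-2})$ is $(x+a-\tfrac12)$, which yields $\mathcal{O}(x^{-1})$ directly.)

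There is nothing to compare against: the paper does not prove this lemma but simply quotes it from \cite{TricomiF1951A_GammaRatio} as a classical result. Your Stirling derivation (and the alternative via Wendel's inequality) are both well-known routes to this first-order expansion, and either would be an appropriate way to supply the missing argument.
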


Now we can prove the following first-order asymptotic expansions of $A_k$ and $B_k$:
\begin{lemma}\label{AM:lem:ABk}
If a stepsize sequence takes the form \eqref{AM:eq:stepsize}, then we have the following first-order asymptotic expansions
\begin{equation*}
  A_k=\frac{\Gamma(1+\sigma)}{\Gamma(1+\sigma-\frac{sl}{2})}
  (k+1+\sigma)^{-\frac{sl}{2}}
  +\mathcal{O}\left((k+1+\sigma)^{-1-\frac{sl}{2}}\right)
\end{equation*}
and
\begin{equation*}
  B_k(a)=\frac{Cs^{2+a}}{\frac{sl}{2}\!-\!1\!-\!a}(k+1+\sigma)^{-1-a}
  +\mathcal{O}\left((k+1+\sigma)^{-2-a}\right)
\end{equation*}
as $k\to\infty$, where $C$ is a positive real number.
\end{lemma}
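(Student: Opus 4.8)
The plan is to convert both $A_k$ and $B_k(a)$ into ratios of gamma functions and then extract the asymptotics from Lemma~\ref{AM:lem:GammaRatio}. For $A_k$, substituting $\alpha_i=s/(i+\sigma)$ gives $1-\tfrac{\alpha_i l}{2}=\tfrac{i+\sigma-sl/2}{i+\sigma}$. The admissibility requirement $\alpha_1\le 1/(LM_{G,p}^{(1)})$ built into \eqref{AM:eq:stepsize}, together with $l\le L$ and $M_{G,p}^{(1)}\ge\tfrac32$ (Lemma~\ref{AM:lem:EmB}), forces $\tfrac{\alpha_1 l}{2}\le\tfrac13<1$, hence $1+\sigma-\tfrac{sl}{2}>0$; so every factor lies in $(0,1)$ and $\Gamma(1+\sigma-\tfrac{sl}{2})$ is finite. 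Telescoping the numerators and denominators of $A_k=\prod_{i=1}^k\tfrac{i+\sigma-sl/2}{i+\sigma}$ by means of $(z)_t\Gamma(z)=\Gamma(t+z)$ yields the exact identity $A_k=\tfrac{\Gamma(1+\sigma)}{\Gamma(1+\sigma-sl/2)}\cdot\tfrac{\Gamma(k+1+\sigma-sl/2)}{\Gamma(k+1+\sigma)}$, and Lemma~\ref{AM:lem:GammaRatio} with $x=k+1+\sigma$ and exponent $-sl/2$ immediately gives the claimed expansion of $A_k$.

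For $B_k(a)$, use $\prod_{j=i+1}^k(1-\tfrac{\alpha_j l}{2})=A_k/A_i$ to write $B_k(a)=A_k\sum_{i=1}^k\alpha_i^{2+a}/A_i$, where from the above $1/A_i=\tfrac{\Gamma(1+\sigma-sl/2)}{\Gamma(1+\sigma)}\cdot\tfrac{\Gamma(i+1+\sigma)}{\Gamma(i+1+\sigma-sl/2)}$ and $\alpha_i^{2+a}=s^{2+a}(i+\sigma)^{-2-a}$. Applying Lemma~\ref{AM:lem:GammaRatio} to the $i$-dependent gamma ratio and replacing $(i+\sigma)^{-2-a}$ by $(i+1+\sigma)^{-2-a}(1+\mathcal{O}(1/i))$ gives $\alpha_i^{2+a}/A_i=\tfrac{\Gamma(1+\sigma-sl/2)}{\Gamma(1+\sigma)}\,s^{2+a}\,(i+1+\sigma)^{\beta}(1+\mathcal{O}(1/i))$ with $\beta=\tfrac{sl}{2}-2-a$. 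The hypothesis $s>4/l$ together with $a\le1$ gives $\beta>-1$, so an Euler--Maclaurin (or integral-comparison) estimate yields $\sum_{i=1}^k(i+1+\sigma)^{\beta}=\tfrac{(k+1+\sigma)^{\beta+1}}{\beta+1}$ plus lower-order terms, with $\beta+1=\tfrac{sl}{2}-1-a>0$. Multiplying back by the expansion of $A_k$, the two gamma constants cancel, the leading term becomes $\tfrac{Cs^{2+a}}{sl/2-1-a}(k+1+\sigma)^{-1-a}$ for the positive constant $C$ produced by the summation, and the remaining contributions — the $\mathcal{O}(1/x)$ corrections from Lemma~\ref{AM:lem:GammaRatio}, the summation remainder, and the product of the subleading part of $A_k$ with the principal part of the sum — collapse into $\mathcal{O}((k+1+\sigma)^{-2-a})$.

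The delicate point, and the main obstacle, is this last step: one must check that every lower-order piece in the $B_k(a)$ computation is genuinely of order $(k+1+\sigma)^{-2-a}$, and here the hypothesis $s>4/l$, i.e.\ $\tfrac{sl}{2}>2$, is used decisively — it puts $\beta$ in the divergent-sum regime, keeps $\beta+1>0$, and makes $(k+1+\sigma)^{-sl/2}$ lie safely below the main term. A clean alternative for $B_k(a)$ that bypasses the explicit summation is to exploit the one-step recursion $B_k(a)=(1-\tfrac{\alpha_k l}{2})B_{k-1}(a)+\alpha_k^{2+a}$ and verify by a dominant-balance/induction argument that $(k+1+\sigma)^{1+a}B_k(a)\to\tfrac{s^{2+a}}{sl/2-1-a}$, controlling the remainder by comparing the homogeneous part of the recursion with $A_k$; I would nonetheless keep the gamma-function route as the primary line, since it produces the error term directly.
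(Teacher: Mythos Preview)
Your proposal is correct and follows essentially the same route as the paper: rewrite $1-\tfrac{\alpha_i l}{2}$ as a ratio, collapse the product via $(z)_t\Gamma(z)=\Gamma(t+z)$ into a ratio of gamma functions, apply Lemma~\ref{AM:lem:GammaRatio}, and for $B_k(a)$ combine the resulting gamma ratio with an integral-comparison estimate of the remaining power sum. Your extra observation that $1+\sigma-\tfrac{sl}{2}>0$ (hence $\Gamma(1+\sigma-\tfrac{sl}{2})$ is finite and positive) is a welcome sanity check the paper omits, and the recursion-based alternative you sketch is not in the paper but is a legitimate second route; otherwise the arguments coincide.
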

\begin{proof}
According to the stepsize policy \eqref{AM:eq:stepsize}, i.e.,
\begin{equation*}
  \alpha_k=\frac{s}{k+\sigma}~~\textrm{for some}~~s>\frac{4}{l}~~
  \textrm{and}~~\sigma>0~~\textrm{such that}~~
  \alpha_1\leqslant\frac{1}{LM_{G,p}^{(1)}},
\end{equation*}
we have
\begin{equation*}
  1-\frac{\alpha_il}{2}=\frac{i+\sigma-\frac{sl}{2}}{i+\sigma}
  ~~\textrm{with}~~\frac{sl}{2}>2.
\end{equation*}
By using the Pochhammer symbol, $A_k$ can be written as
\begin{equation*}
  A_k=\prod_{i=1}^k\Big(1-\frac{\alpha_il}{2}\Big)
  =\prod_{i=1}^k\frac{i+\sigma-\frac{sl}{2}}{i+\sigma}
  =\frac{(1+\sigma-\frac{sl}{2})_k}{(1+\sigma)_k},
\end{equation*}
together with the recursive formula \eqref{AM:eq:gamma}, $A_k$ can be further written as
\begin{equation*}
  A_k=\frac{(1+\sigma-\frac{sl}{2})_k}{(1+\sigma)_k}
  =\frac{\Gamma(1+\sigma)}{\Gamma(1+\sigma-\frac{sl}{2})}
  \frac{\Gamma(k+1+\sigma-\frac{sl}{2})}{\Gamma(k+1+\sigma)},
\end{equation*}
then the first-order asymptotic expansion of $A_k$ can be obtained from Lemma \ref{AM:lem:GammaRatio}.

Similarly, according to the stepsize policy \eqref{AM:eq:stepsize}, we have
\begin{equation*}
  B_k(a)=\sum_{i=1}^k\frac{s^{2+a}}{(i+\sigma)^{2+a}}
  \prod_{j=i+1}^k\frac{j+\sigma-\frac{sl}{2}}{j+\sigma}.
\end{equation*}
By using the Pochhammer symbol, the cumulative product term in the sum above can be written as
\begin{equation*}
  \prod_{j=i+1}^k\frac{j+\sigma-sl}{j+\sigma}
  =\frac{(i+1+\sigma-\frac{sl}{2})_{k-i}}{(i+1+\sigma)_{k-i}},
\end{equation*}
together with the recursive formula \eqref{AM:eq:gamma}, this cumulative product term above can be further written as
\begin{equation*}
  \prod_{j=i+1}^k\frac{j+\sigma-sl}{j+\sigma}
  =\frac{(i+1+\sigma-\frac{sl}{2})_{k-i}}{(i+1+\sigma)_{k-i}}
  =\frac{\Gamma(k+1+\sigma-\frac{sl}{2})}{\Gamma(k+1+\sigma)}
  \frac{\Gamma(i+1+\sigma)}{\Gamma(i+1+\sigma-\frac{sl}{2})},
\end{equation*}
then $B_k(a)$ can be rewritten as
\begin{equation*}
  B_k(a)=
  \frac{s^{2+a}\Gamma(k+1+\sigma-\frac{sl}{2})}{\Gamma(k+1+\sigma)}
  \sum_{i=1}^k\frac{1}{(i+\sigma)^{2+a}}
  \frac{\Gamma(i+1+\sigma)}{\Gamma(i+1+\sigma-\frac{sl}{2})}.
\end{equation*}
Further, it follows from Lemma \ref{AM:lem:GammaRatio} that
\begin{equation*}
  \frac{\Gamma(k+1+\sigma-\frac{sl}{2})}{\Gamma(k+1+\sigma)}
  =(k+1+\sigma)^{-\frac{sl}{2}}
  +\mathcal{O}\left((k+1+\sigma)^{-\frac{sl}{2}-1}\right),
\end{equation*}
and it follows from \eqref{AM:eq:localOrder} and Lemma \ref{AM:lem:GammaRatio} that
\begin{align*}
  \sum_{i=1}^k\frac{1}{(i\!+\!\sigma)^{2+a}}
  \frac{\Gamma(i\!+\!1\!+\!\sigma)}
  {\Gamma(i\!+\!1\!+\!\sigma\!-\!\frac{sl}{2})}
  =&\sum_{i=1}^k
  \frac{(i\!+\!1\!+\!\sigma\!-\!\frac{sl}{2})^{\frac{sl}{2}}}
  {(i\!+\!\sigma)^{2+a}}+\mathcal{O}\left(\sum_{i=1}^k
  \frac{(i\!+\!1\!+\!\sigma\!-\!\frac{sl}{2})^{\frac{sl}{2}-1}}
  {(i+\sigma)^{2+a}}\right) \\
  =&\frac{C}{\frac{sl}{2}\!-\!1\!-\!a}
  (k\!+\!1\!+\!\sigma)^{\frac{sl}{2}-1-a}
  +\mathcal{O}\left((k\!+\!1\!+\!\sigma)^{\frac{sl}{2}-2-a}\right),
\end{align*}
where $C$ is a positive real number. Hence, we finally get
\begin{align*}
  B_k(a)=\frac{Cs^{2+a}}{\frac{sl}{2}\!-\!1\!-\!a}(k+1+\sigma)^{-1-a}
  +\mathcal{O}\left((k+1+\sigma)^{-2-a}\right),
\end{align*}
as desired.
\end{proof}

Combining Theorem \ref{AM:thm:ABm} and Lemma \ref{AM:lem:ABk}, we see that
\begin{lemma}\label{AM:lem:main1}
Suppose the conditions of Theorem \ref{AM:thm:ABm} hold. Then there are $C_A,C_B>0$ such that the bound
\begin{equation*}
  \mathbb{E}[F(x_{k+1})]-F_*\leqslant
  \frac{C_A\Gamma(1+\sigma)}{\Gamma(1+\sigma-\frac{sl}{2})}
  \frac{F(x_1)-F_*}{(k+1+\sigma)^{\frac{sl}{2}}}
  +\frac{C_Bs^2}{sl-2}\frac{LM_{d,p,1}^{(1)}}{k+1+\sigma}
\end{equation*}
holds in probability, where $sl>4$.
\end{lemma}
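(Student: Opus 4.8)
The plan is to feed the two first-order asymptotic expansions of Lemma~\ref{AM:lem:ABk} into the recursive estimate of Theorem~\ref{AM:thm:ABm}, written in the compact form~\eqref{AM:eq:R1r}:
\begin{equation*}
  \mathbb{E}[F(x_{k+1})-F_*]\leqslant[F(x_1)-F_*]A_k
  +\frac{LM_{d,p,1}^{(1)}}{2}B_k(0).
\end{equation*}
Since Theorem~\ref{AM:thm:ABm} only delivers this inequality in probability, the whole argument is carried out on that event, and everything that follows is a deterministic statement about the numerical sequences $A_k$ and $B_k(0)$.

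First I would record a preliminary observation that legitimizes the constants appearing in the claim. Because $M_{G,p}^{(1)}=\frac{3}{2}+2C_p\alpha_1M_V\geqslant\frac{3}{2}$, the stepsize condition forces $\alpha_1\leqslant\frac{2}{3L}<\frac{2}{l}$ (using $l\leqslant L$), so $\frac{sl}{2}<1+\sigma$ and every factor $1-\frac{\alpha_il}{2}$ lies in $(0,1)$; in particular $A_k>0$ and $\Gamma(1+\sigma-\frac{sl}{2})$ is a well-defined positive real. Next I would specialize Lemma~\ref{AM:lem:ABk} to $a=0$ and rewrite $\frac{Cs^{2}}{\frac{sl}{2}-1}=\frac{2Cs^{2}}{sl-2}$, obtaining
\begin{equation*}
\begin{aligned}
  A_k&=\frac{\Gamma(1+\sigma)}{\Gamma(1+\sigma-\frac{sl}{2})}
  (k+1+\sigma)^{-\frac{sl}{2}}\Big(1+\mathcal{O}\big((k+1+\sigma)^{-1}\big)\Big),\\
  B_k(0)&=\frac{2Cs^{2}}{sl-2}(k+1+\sigma)^{-1}
  \Big(1+\mathcal{O}\big((k+1+\sigma)^{-1}\big)\Big).
\end{aligned}
\end{equation*}
The sequence $(k+1+\sigma)^{\frac{sl}{2}}A_k$ converges to the positive number $\Gamma(1+\sigma)/\Gamma(1+\sigma-\frac{sl}{2})$ and is positive for every $k$, hence is bounded above by $C_A\,\Gamma(1+\sigma)/\Gamma(1+\sigma-\frac{sl}{2})$ for a suitable $C_A>0$; similarly $(k+1+\sigma)B_k(0)$ converges to $\frac{2Cs^{2}}{sl-2}$ and is bounded above, so $\frac{1}{2}B_k(0)\leqslant\frac{C_Bs^{2}}{sl-2}(k+1+\sigma)^{-1}$ with $C_B>0$ absorbing $C$ and the bounded correction factor. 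Substituting these two bounds together with $F(x_1)-F_*\geqslant0$ into~\eqref{AM:eq:R1r} yields exactly the asserted inequality, with $sl>4$ (already part of~\eqref{AM:eq:stepsize}) guaranteeing $sl-2>0$ so that the second denominator is positive.

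The only genuine subtlety --- and the step I would be most careful about --- is turning the asymptotic $\mathcal{O}$-statements of Lemma~\ref{AM:lem:ABk} into bounds valid for \emph{every} $k\in\mathbb{N}$: the expansions are tight only for large $k$, so one must enlarge $C_A$ and $C_B$ to absorb the finitely many small indices, which is permissible precisely because each of $(k+1+\sigma)^{\frac{sl}{2}}A_k$ and $(k+1+\sigma)B_k(0)$ is a convergent, hence bounded, sequence of strictly positive reals. Everything else is routine bookkeeping of constants.
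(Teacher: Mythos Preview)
Your proposal is correct and follows essentially the same approach as the paper, which simply states that the result is obtained by combining Theorem~\ref{AM:thm:ABm} (in the form \eqref{AM:eq:R1r}) with the asymptotic expansions of Lemma~\ref{AM:lem:ABk}. Your write-up in fact supplies details the paper omits, notably the check that $\frac{sl}{2}<1+\sigma$ so that $\Gamma(1+\sigma-\frac{sl}{2})>0$, and the explicit passage from the $\mathcal{O}$-asymptotics to uniform bounds via boundedness of the convergent positive sequences $(k+1+\sigma)^{\frac{sl}{2}}A_k$ and $(k+1+\sigma)B_k(0)$.
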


Note that for the $(k +1)$th iteration, the entire mean-variance analysis process is only related to the first $k$ iterations. Thus, combining \eqref{AM:eq:CRf} and Lemma \ref{AM:lem:main1}, we prove that
\begin{theorem}\label{AM:thm:main1}
Suppose the conditions of Theorem \ref{AM:thm:ABm} hold. Then for every $k\in\mathbb{N}$, there are $C_A,C_B>0$ such that the bound
\begin{equation*}
  \mathbb{E}[F(x_k)]-F_*\leqslant
  \frac{C_A\Gamma(1+\sigma)}{\Gamma(1+\sigma-\frac{sl}{2})}
  \frac{F(x_1)-F_*}{(k+\sigma)^{\frac{sl}{2}}}
  +\frac{C_Bs^2}{sl-2}\frac{LM_{d,p,1}^{(1)}}{k+\sigma}
\end{equation*}
holds in probability, where $sl>4$; that is, $\mathbb{E}[F(x_k)]-F_*=\mathcal{O}(1/k)$.
\end{theorem}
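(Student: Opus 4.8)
The proof amounts to closing the induction on $k$ that was set up around \eqref{AM:eq:CRf} and then merely re-indexing the bound of Lemma \ref{AM:lem:main1}. So the plan is: (i) establish the base case, (ii) run the inductive step through the chain of intermediate results, and (iii) translate the $(k+1)$-indexed bound of Lemma \ref{AM:lem:main1} into the $k$-indexed statement.

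First I would check the base case. For $k=1$ the accelerated iteration \eqref{AM:eq:AM}--\eqref{AM:eq:AMm} coincides with an ordinary stochastic gradient step, so the classical $\mathcal{O}(1/k)$ guarantee for strongly convex objectives with Lipschitz-continuous gradients under Assumptions \ref{AM:ass:A1}--\ref{AM:ass:A2} supplies \eqref{AM:eq:CRf} for the first iterate; this anchors the induction.

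Next, the inductive step. Assuming \eqref{AM:eq:CRf} holds for the first $k$ iterates $\{x_j\}_{j=1}^{k}$, I would observe (as remarked after \eqref{AM:eq:CRf}) that this hypothesis is precisely what feeds the whole chain leading to the $(k+1)$th step: it gives \eqref{AM:eq:CRx}, hence \eqref{AM:eq:delta}, hence Lemma \ref{AM:lem:mdecay1} via Chebyshev, hence Theorem \ref{AM:thm:momentum}; combined with Assumption \ref{AM:ass:A2} and Lemma \ref{AM:lem:momentumV} this yields Lemma \ref{AM:lem:EmB}, and then Theorem \ref{AM:thm:ABm} together with Lemma \ref{AM:lem:ABk} delivers Lemma \ref{AM:lem:main1}, i.e., in probability,
\[
  \mathbb{E}[F(x_{k+1})]-F_*\leqslant
  \frac{C_A\Gamma(1+\sigma)}{\Gamma(1+\sigma-\frac{sl}{2})}
  \frac{F(x_1)-F_*}{(k+1+\sigma)^{\frac{sl}{2}}}
  +\frac{C_Bs^2}{sl-2}\frac{LM_{d,p,1}^{(1)}}{k+1+\sigma}.
\]
Since $sl>4$, the first term is $\mathcal{O}\!\big((k+1)^{-sl/2}\big)=o\!\big(1/(k+1)\big)$, so the right-hand side is $\mathcal{O}(1/(k+1))$; thus \eqref{AM:eq:CRf} holds for $x_{k+1}$ too, closing the induction. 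Finally, for a fixed $k$ I would apply the displayed inequality with $k$ in place of $k+1$ — legitimate because the mean--variance analysis producing the $(k+1)$th iterate involves only the first $k$ iterates, for which \eqref{AM:eq:CRf} has just been established — obtaining the stated bound and hence $\mathbb{E}[F(x_k)]-F_*=\mathcal{O}(1/k)$.

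The main obstacle is bookkeeping rather than any deep difficulty: one must confirm that the constants $D'_p$, $C_p$, $M_{G,p}^{(1)}$, $M_{d,p,1}^{(1)}$, $C_A$, $C_B$ generated along the way are genuinely independent of $k$ (they are, arising from the limits in Lemma \ref{AM:lem:EmB} and the asymptotics in Lemma \ref{AM:lem:ABk}), that the finitely many ``in probability'' qualifiers can be chained through each inductive stage without the failure probabilities accumulating, and that the boundedness hypothesis $\|x_i-x_j\|_2\leqslant D$ used in Lemma \ref{AM:lem:momentumV} is consistent with the high-probability control coming from \eqref{AM:eq:CRx}.
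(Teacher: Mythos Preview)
Your proposal is correct and follows essentially the same approach as the paper. The paper's own proof is a single sentence: it observes that the mean--variance analysis for the $(k+1)$th iterate depends only on the first $k$ iterates, and then combines the induction hypothesis \eqref{AM:eq:CRf} with Lemma \ref{AM:lem:main1}; your write-up simply makes that induction explicit (base case, inductive step, re-indexing) and adds the bookkeeping checks on the constants and the ``in probability'' qualifiers.
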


Similarly, combining Lemma \ref{AM:lem:ABk}, Theorems \ref{AM:thm:ABm2} and \ref{AM:thm:main1}, we see that
\begin{theorem}\label{AM:thm:main2}
Suppose the conditions of Theorem \ref{AM:thm:ABm2} hold. Then for every $k\in\mathbb{N}$, there are $C_A,C'_B>0$ such that the bound
\begin{equation*}
  \mathbb{E}[F(x_k)]-F_*\leqslant
  \frac{C_A\Gamma(1+\sigma)}{\Gamma(1+\sigma-\frac{sl}{2})}
  \frac{F(x_1)-F_*}{(k+\sigma)^{\frac{sl}{2}}}
  +\frac{C'_Bs^{2+\kappa'}}{sl-2-2\kappa'}
  \frac{LM_{d,p,2}^{(1)}}{(k+\sigma)^{1+\kappa'}}
\end{equation*}
holds in probability, where $\kappa'=\min(1,\kappa)$ and $sl>4$; that is, $\mathbb{E}[F(x_k)]-F_*=\mathcal{O}(1/k^{1+\kappa'})$.
\end{theorem}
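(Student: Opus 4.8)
The plan is to combine the recursive estimate of Theorem~\ref{AM:thm:ABm2} with the asymptotic expansions of Lemma~\ref{AM:lem:ABk} and then shift the index by one, exactly mirroring the passage from Theorem~\ref{AM:thm:ABm} and Lemma~\ref{AM:lem:ABk} to Theorem~\ref{AM:thm:main1}. First I would restate the conclusion of Theorem~\ref{AM:thm:ABm2} in the compact form \eqref{AM:eq:R2r},
\[
  \mathbb{E}[F(x_{k+1})-F_*]\leqslant[F(x_1)-F_*]A_k+\frac{LM_{d,p,2}^{(1)}}{2}B_k(\kappa'),\qquad \kappa'=\min(1,\kappa),
\]
and then insert the first-order expansions supplied by Lemma~\ref{AM:lem:ABk}, namely $A_k=\frac{\Gamma(1+\sigma)}{\Gamma(1+\sigma-sl/2)}(k+1+\sigma)^{-sl/2}+\mathcal{O}\big((k+1+\sigma)^{-1-sl/2}\big)$ and $B_k(\kappa')=\frac{Cs^{2+\kappa'}}{sl/2-1-\kappa'}(k+1+\sigma)^{-1-\kappa'}+\mathcal{O}\big((k+1+\sigma)^{-2-\kappa'}\big)$.

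The next step is an elementary comparison of decay orders. The stepsize policy \eqref{AM:eq:stepsize} enforces $s>4/l$, hence $sl/2>2\geqslant1+\kappa'$, so the $A_k$-term decays strictly faster than $(k+1+\sigma)^{-1-\kappa'}$; I would absorb it, together with the two $\mathcal{O}$-remainders, into two finite constants $C_A,C'_B>0$, the first multiplying the Gamma prefactor in front of $(k+1+\sigma)^{-sl/2}$ and the second the coefficient $\tfrac{s^{2+\kappa'}}{sl-2-2\kappa'}$ in front of $(k+1+\sigma)^{-1-\kappa'}$ (using $\tfrac1{sl/2-1-\kappa'}=\tfrac2{sl-2-2\kappa'}$). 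This produces the asserted inequality with $k+1+\sigma$ in the denominators; it holds only in probability, the qualifier being inherited through Lemmas~\ref{AM:lem:mdecay1}--\ref{AM:lem:mdecay2}, where Chebyshev's inequality is used.

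Finally, as noted before Theorem~\ref{AM:thm:main1}, the whole mean--variance analysis behind Theorem~\ref{AM:thm:ABm2} involves only the first $k$ iterates and the hypothesis \eqref{AM:eq:CRf} on them; since Theorem~\ref{AM:thm:main1} already guarantees \eqref{AM:eq:CRf} for every $k$, the displayed bound is unconditional, and re-running the argument with $k-1$ in place of $k$ (adjusting the constants to absorb the first few indices) replaces $k+1+\sigma$ by $k+\sigma$, giving precisely the stated estimate for $\mathbb{E}[F(x_k)]-F_*$; because $1+\kappa'<sl/2$ the second term dominates and $\mathbb{E}[F(x_k)]-F_*=\mathcal{O}(1/k^{1+\kappa'})$. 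I expect the genuine work to be entirely upstream (in Lemmas~\ref{AM:lem:mdecay2}, \ref{AM:lem:EmB2} and \ref{AM:lem:ABk}); the only care points here are bookkeeping of the ``in probability'' qualifiers and the constants, checking that $C_A$ may be taken as in Theorem~\ref{AM:thm:main1}, and recording that the exponent saturates at $\kappa'=1$ when $\kappa\geqslant1$ --- consistent with the remark in the proof of Lemma~\ref{AM:lem:mdecay2} that the variance then becomes the principal part --- so that $\mathcal{O}(1/k^2)$ is the best rate this route delivers without a further bootstrap.
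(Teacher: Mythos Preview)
Your proposal is correct and follows essentially the same route as the paper: the paper's proof is simply the one-line remark ``combining Lemma~\ref{AM:lem:ABk}, Theorems~\ref{AM:thm:ABm2} and~\ref{AM:thm:main1}, we see that\ldots'', and you have accurately unpacked this into the substitution of the asymptotic expansions for $A_k$ and $B_k(\kappa')$ into \eqref{AM:eq:R2r}, the use of Theorem~\ref{AM:thm:main1} to secure the inductive hypothesis \eqref{AM:eq:CRf}, and the index shift from $k+1$ to $k$.
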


Therefore, when $0<\kappa\leqslant1$, Assumption \ref{AM:ass:A1} and Theorem \ref{AM:thm:main2} implies that for every $1\leqslant j\leqslant k$, it holds that
\begin{equation*}
  \mathbb{E}[\|x_j-x_*\|_2^2]\leqslant2l^{-1}(\mathbb{E}[F(x_j)]-F_*)
  =\mathcal{O}(j^{-1-\kappa})=\mathcal{O}(j^{-1-\kappa}),
\end{equation*}
then we have
\begin{align*}
  \mathbb{E}[\|x_j-x_k\|_2^2]\leqslant&
  \mathbb{E}[\|x_j-x_*\|_2+\|x_k-x_*\|_2]^2 \\
  =&\mathbb{E}[\|x_j-x_*\|_2^2+2\|x_j-x_*\|_2\|x_k-x_*\|_2
  +\|x_k-x_*\|_2^2] \\
  \leqslant&2\mathbb{E}[\|x_j-x_*\|_2^2+\|x_k-x_*\|_2^2]
  =\mathcal{O}(j^{-1-\kappa}),
\end{align*}
together with Assumption \ref{AM:ass:A3}, we obtain
\begin{equation}\label{AM:eq:REC2}
  \|\mathbb{E}[x_j-x_k]\|_2^2=\mathcal{O}
  \Big(j^{-\kappa}\mathbb{E}[\|x_j-x_k\|_2^2]\Big)
  =\mathcal{O}\big(j^{-1-2\kappa}\big).
\end{equation}
Hence, from \eqref{AM:eq:delta2} and \eqref{AM:eq:REC2}, it is clear that $\|\mathbb{E}[x_j-x_k]\|_2^2=\mathcal{O}(j^{-1-\kappa})$ implies $\|\mathbb{E}[x_j-x_k]\|_2^2= \mathcal{O}(j^{-1-2\kappa})$ for every $0<\kappa\leqslant1$, which actually means
\begin{equation*}
  \|\mathbb{E}[x_j-x_k]\|_2^2=\mathcal{O}(j^{-2}),
\end{equation*}
that is, $\kappa'=1$; thus, we have the final result.
\begin{theorem}\label{AM:thm:final}
Suppose the conditions of Theorem \ref{AM:thm:ABm2} hold. Then for every $k\in\mathbb{N}$, there are $C_A,C'_B>0$ such that the bound
\begin{equation*}
  \mathbb{E}[F(x_k)]-F_*\leqslant
  \frac{C_A\Gamma(1+\sigma)}{\Gamma(1+\sigma-\frac{sl}{2})}
  \frac{F(x_1)-F_*}{(k+\sigma)^{\frac{sl}{2}}}+
  \frac{C'_Bs^3}{sl-4}\frac{LM_{d,p,2}^{(1)}}{(k+\sigma)^2}
\end{equation*}
holds in probability, where $sl>4$; that is, $\mathbb{E}[F(x_k)]-F_*=\mathcal{O}(1/k^2)$.
\end{theorem}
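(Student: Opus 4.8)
The plan is to reach the $\mathcal{O}(1/k^2)$ rate by a \emph{finite bootstrap}: repeatedly feed the current convergence rate back through the variance-dominant condition (Assumption~\ref{AM:ass:A3}) to sharpen the decay of $\|\mathbb{E}[x_j-x_k]\|_2^2$, re-run the chain Lemma~\ref{AM:lem:mdecay2} $\to$ Theorem~\ref{AM:thm:momentum2} $\to$ Lemma~\ref{AM:lem:EmB2} $\to$ Theorem~\ref{AM:thm:ABm2} with the improved exponent, and stop once the exponent saturates; the final inequality is then read off from the $B_k(1)$ asymptotics of Lemma~\ref{AM:lem:ABk}. If $\kappa\geqslant 1$ there is nothing to do, since $\kappa'=\min(1,\kappa)=1$ and Theorem~\ref{AM:thm:main2} already gives $\mathbb{E}[F(x_k)]-F_*=\mathcal{O}(1/k^{1+\kappa'})=\mathcal{O}(1/k^2)$; so assume $0<\kappa<1$.

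For one bootstrap round, suppose $\mathbb{E}[F(x_j)]-F_*=\mathcal{O}(j^{-q})$ for all $j$ with $1\leqslant q<2$, the base case $q=1+\kappa$ being Theorem~\ref{AM:thm:main2}. As in \eqref{AM:eq:CRx}--\eqref{AM:eq:delta}, strong convexity and the triangle inequality upgrade this to $\mathbb{E}[\|x_j-x_k\|_2^2]=\mathcal{O}(j^{-q})$ for $1\leqslant j\leqslant k$, and then Assumption~\ref{AM:ass:A3} gives $\|\mathbb{E}[x_j-x_k]\|_2^2=\mathcal{O}\big(j^{-\kappa}\mathbb{E}[\|x_j-x_k\|_2^2]\big)=\mathcal{O}(j^{-q-\kappa})$. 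Inserting this sharper bound into the proof of Lemma~\ref{AM:lem:mdecay2} in place of \eqref{AM:eq:delta2} changes only the exponent: the mean of $\sum_{j=1}^k j^p\Lambda(x_j-x_k)$ becomes $\mathcal{O}(k^{p+1-(q+\kappa)/2})$ while the variance estimate \eqref{AM:eq:localV} is unchanged, so Chebyshev's inequality yields \eqref{AM:eq:mdecay2} with $s=\min(1,(q+\kappa)/2)$. Propagating $s$ through Theorem~\ref{AM:thm:momentum2} and Lemma~\ref{AM:lem:EmB2} replaces the constant-term exponent by $\alpha_k^{2+a}$ with $a=\min(1,q+\kappa-1)$ (the additive-noise part of $M_{d,p,2}^{(k)}$ stays of order $\alpha_k^3$), and Lemma~\ref{AM:lem:ABk} then gives $\mathbb{E}[F(x_k)]-F_*=\mathcal{O}(k^{-1-a})$; since $sl>4$ forces the $A_k$ term to decay faster than $k^{-2}$, the net effect is that the rate exponent improves from $q$ to $\min(q+\kappa,2)$.

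Iterating produces exponents $q_0=1+\kappa$ and $q_{m+1}=\min(q_m+\kappa,2)$, which reach $q_m=2$ after at most $\lceil 1/\kappa\rceil$ rounds; equivalently $\|\mathbb{E}[x_j-x_k]\|_2^2=\mathcal{O}(j^{-2})$, i.e., $\kappa'=1$, as asserted just above the theorem. With $\kappa'=1$, apply Theorem~\ref{AM:thm:ABm2} and \eqref{AM:eq:R2r} together with Lemma~\ref{AM:lem:ABk} at $a=1$: $A_k=\frac{\Gamma(1+\sigma)}{\Gamma(1+\sigma-sl/2)}(k+1+\sigma)^{-sl/2}+\mathcal{O}\big((k+1+\sigma)^{-1-sl/2}\big)$ and $B_k(1)=\frac{Cs^3}{sl/2-2}(k+1+\sigma)^{-2}+\mathcal{O}\big((k+1+\sigma)^{-3}\big)$, the latter being finite and positive precisely because $sl/2-2=(sl-4)/2>0$ under the stepsize policy \eqref{AM:eq:stepsize}. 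Absorbing the remainders and the factor $2$ from $(sl-4)/2$ into constants $C_A,C'_B>0$, and shifting the index from $k+1$ to $k$ exactly as in the passage from Lemma~\ref{AM:lem:main1} to Theorem~\ref{AM:thm:main1}, yields the stated bound, whose dominant term is $\frac{C'_Bs^3}{sl-4}\frac{LM_{d,p,2}^{(1)}}{(k+\sigma)^2}=\mathcal{O}(1/k^2)$ since $sl/2>2$ makes the $A_k$ term negligible.

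The main obstacle is keeping the bootstrap honest rather than circular. Each round uses the previous round's rate simultaneously for all iterates $\{x_j\}_{j\leqslant k}$, so it must mesh with the induction on $k$ already built into the standing hypothesis \eqref{AM:eq:CRf}; in addition one must verify that the constants $D_p$, $C_p$, $M_{d,p,2}^{(k)}$ generated at each exponent stage stay finite and that Chebyshev's inequality can be applied uniformly, so that the ``in probability'' conclusions of Lemmas~\ref{AM:lem:mdecay1}--\ref{AM:lem:mdecay2} still compose after finitely many rounds. Because only $\lceil 1/\kappa\rceil$ rounds are needed and each is a verbatim re-run of an already-established lemma chain with a single exponent changed, this is careful bookkeeping rather than a new analytic difficulty --- but it is the step where precision is essential.
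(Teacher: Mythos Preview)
Your proposal is correct and follows essentially the same route as the paper. The paper's argument (the paragraph immediately preceding the theorem) shows one bootstrap step --- from $\|\mathbb{E}[x_j-x_k]\|_2^2=\mathcal{O}(j^{-1-\kappa})$ to $\mathcal{O}(j^{-1-2\kappa})$ via Theorem~\ref{AM:thm:main2} and Assumption~\ref{AM:ass:A3} --- and then simply asserts ``which actually means $\|\mathbb{E}[x_j-x_k]\|_2^2=\mathcal{O}(j^{-2})$''; your explicit recursion $q_{m+1}=\min(q_m+\kappa,2)$ terminating in $\lceil 1/\kappa\rceil$ rounds, together with the re-application of Lemma~\ref{AM:lem:ABk} at $a=1$, is precisely the content the paper leaves implicit, and your caveats about uniformity of constants and the interaction with the induction on $k$ are exactly the bookkeeping the paper glosses over.
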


\section{Conclusions}
\label{AM:s5}

In this work, we discussed the question of whether it is possible to apply a gradient averaging strategy to improve on the sublinear convergence rates without any increase in storage for SG methods. We proposed a gradient averaging strategy and proved that under the variance dominant condition, the proposed strategy could improve the convergence rate $\mathcal{O}(1/k)$ to $\mathcal{O}(1/k^2)$ in probability without any increase in storage for the strongly convex objectives with Lipschitz gradients. This result suggests how we should control the stochastic gradient iterations to improve the rate of convergence in practice.

% Acknowledgements should go at the end, before appendices and references
\iffalse
\acks{We would like to acknowledge support for this project
from the National Science Foundation (NSF grant IIS-9988642)
and the Multidisciplinary Research Program of the Department
of Defense (MURI N00014-00-1-0637). }

% Manual newpage inserted to improve layout of sample file - not
% needed in general before appendices/bibliography.

\newpage

\appendix
\section*{Appendix A.}
\label{app:theorem}

% Note: in this sample, the section number is hard-coded in. Following
% proper LaTeX conventions, it should properly be coded as a reference:

%In this appendix we prove the following theorem from
%Section~\ref{sec:textree-generalization}:

In this appendix we prove the following theorem from
Section~6.2:

\noindent
{\bf Theorem} {\it Let $u,v,w$ be discrete variables such that $v, w$ do
not co-occur with $u$ (i.e., $u\neq0\;\Rightarrow \;v=w=0$ in a given
dataset $\dataset$). Let $N_{v0},N_{w0}$ be the number of data points for
which $v=0, w=0$ respectively, and let $I_{uv},I_{uw}$ be the
respective empirical mutual information values based on the sample
$\dataset$. Then
\[
	N_{v0} \;>\; N_{w0}\;\;\Rightarrow\;\;I_{uv} \;\leq\;I_{uw}
\]
with equality only if $u$ is identically 0.} \hfill\BlackBox

\noindent
{\bf Proof}. We use the notation:
\[
P_v(i) \;=\;\frac{N_v^i}{N},\;\;\;i \neq 0;\;\;\;
P_{v0}\;\equiv\;P_v(0)\; = \;1 - \sum_{i\neq 0}P_v(i).
\]
These values represent the (empirical) probabilities of $v$
taking value $i\neq 0$ and 0 respectively.  Entropies will be denoted
by $H$. We aim to show that $\fracpartial{I_{uv}}{P_{v0}} < 0$....\\

{\noindent \em Remainder omitted in this sample. See http://www.jmlr.org/papers/ for full paper.}
\fi

\vskip 0.2in
%\bibliography{sample}
\bibliographystyle{siamplain}
\bibliography{MReferences}

\begin{thebibliography}{27}
\providecommand{\natexlab}[1]{#1}
\providecommand{\url}[1]{\texttt{#1}}
\expandafter\ifx\csname urlstyle\endcsname\relax
  \providecommand{\doi}[1]{doi: #1}\else
  \providecommand{\doi}{doi: \begingroup \urlstyle{rm}\Url}\fi

\bibitem[Blatt et~al.(2007)Blatt, Hero, and Gauchman]{BlattD2007A_IAG}
D~Blatt, A~O Hero, and H~Gauchman.
\newblock A convergent incremental gradient method with a constant step size.
\newblock \emph{SIAM J. Optim.}, 18:\penalty0 29--51, 2007.

\bibitem[Bottou and Bousquet(2007)]{BottouL2007A_TradeoffsLearning}
L~Bottou and O~Bousquet.
\newblock The tradeoffs of large scale learning.
\newblock In J~C Platt, D~Koller, Y~Singer, and S~T Roweis, editors, \emph{Adv.
  Neural Inf. Process. Syst. 20}, pages 161--168, Curran Associates, Red Hook,
  NY, 2007.

\bibitem[Bottou et~al.(2018)Bottou, Curtis, and Nocedal]{BottouL2018R_SGD}
L\'{e}on Bottou, Frank~E Curtis, and Jorge Nocedal.
\newblock Optimization methods for large-scale machine learning.
\newblock \emph{SIAM Review}, 60(2):\penalty0 223--311, 2018.

\bibitem[Defazio et~al.(2014)Defazio, Bach, and
  Lacoste-Julien]{DefazioA2014A_SAGA}
A~Defazio, F~Bach, and S~Lacoste-Julien.
\newblock {SAGA}: A fast incremental gradient method with support for
  non-strongly convex composite objectives.
\newblock In \emph{Advances in Neural Information Processing Systems},
  volume~27, page 1646¨C1654, 2014.

\bibitem[G\"{u}rb\"{u}zbalaban et~al.(2017)G\"{u}rb\"{u}zbalaban, Ozdaglar, and
  Parrilo]{GurbuzbalabanM2017A_IAG}
M~G\"{u}rb\"{u}zbalaban, A~Ozdaglar, and P~A Parrilo.
\newblock On the convergence rate of incremental aggregated gradient
  algorithms.
\newblock \emph{SIAM J. Optim.}, 27:\penalty0 1035--1048, 2017.

\bibitem[Hazan and Kale(2014)]{HazanE2014A_SGIG}
Elad Hazan and Satyen Kale.
\newblock Beyond the regret minimization barrier: optimal algorithms for
  stochastic strongly-convex optimization.
\newblock \emph{Journal of Machine Learning Research}, 15:\penalty0 2489--2512,
  2014.

\bibitem[Johnson and Zhang(2013)]{JohnsonR2013_SVRG}
Rie Johnson and Tong Zhang.
\newblock Accelerating stochastic gradient descent using predictive variance
  reduction.
\newblock In \emph{Advances in Neural Information Processing Systems},
  volume~26, pages 315--323, 2013.

\bibitem[{Le Roux} et~al.(2012){Le Roux}, Schmidt, and
  Bach]{LeRouxN2012A_FiniteSumsSG}
N~{Le Roux}, M~Schmidt, and F~R Bach.
\newblock A stochastic gradient method with an exponential convergence rate for
  finite training sets.
\newblock In \emph{Advances in Neural Information Processing Systems},
  volume~25, page 2663¨C2671, 2012.

\bibitem[{\L}ojasiewicz(1963)]{LojasiewiczS1963A_PolyakGradient}
S~{\L}ojasiewicz.
\newblock A topological property of real analytic subsets (in french).
\newblock \emph{Coll. du CNRS, Les \'{e}quations aux d\'{e}riv\'{e}es
  partielles}, pages 87--89, 1963.

\bibitem[Luo and Xu(2019)]{LuoX2019A_SGFD}
Xiaopeng Luo and Xin Xu.
\newblock Stochastic gradient-free descents.
\newblock \emph{preprint}, https://arxiv.org/abs/1912.13305v5, 2019.

\bibitem[Nemirovski et~al.(2009)Nemirovski, Juditsky, Lan, and
  Shapiro]{NemirovskiA2009A_StochasticProgramming}
A~Nemirovski, A~Juditsky, G~Lan, and A~Shapiro.
\newblock Robust stochastic approximation approach to stochastic programming.
\newblock \emph{SIAM J. Optim.}, 19:\penalty0 1574--1609, 2009.

\bibitem[Nesterov(2009)]{NesterovY2009A_dualSG}
Y~Nesterov.
\newblock Primal-dual subgradient methods for convex problems.
\newblock \emph{Mathematical programming}, 120(1):\penalty0 221--259, 2009.

\bibitem[Polyak(1963)]{PolyakB1963A_Gradient}
B~T Polyak.
\newblock Gradient methods for minimizing functionals (in russian).
\newblock \emph{Zh. Vychisl. Mat. Mat. Fiz.}, 3(4):\penalty0 643--653, 1963.

\bibitem[Polyak(1964)]{PolyakB1964A_momentum}
B~T Polyak.
\newblock Some methods of speeding up the convergence of iteration methods.
\newblock \emph{USSR Comput. Math. Math. Phys.}, 4:\penalty0 1--17, 1964.

\bibitem[Polyak(1991)]{PolyakB1991A_SGIG}
B~T Polyak.
\newblock New method of stochastic approximation type.
\newblock \emph{Automat. Remote Control}, 51:\penalty0 937--946, 1991.

\bibitem[Polyak and Juditsky(1992)]{PolyakB1992A_SGIG}
B~T Polyak and A~B Juditsky.
\newblock Acceleration of stochastic approximation by averaging.
\newblock \emph{SIAM J. Control Optim.}, 30:\penalty0 838--855, 1992.

\bibitem[Rakhlin et~al.(2012)Rakhlin, Shamir, and
  Sridharan]{RakhlinA2012A_SGIG}
Alexander Rakhlin, Ohad Shamir, and Karthik Sridharan.
\newblock Making gradient descent optimal for strongly convex stochastic
  optimization.
\newblock In \emph{Proceedings of the 29th International Coference on
  International Conference on Machine Learning (ICML)}, pages 1571--1578, 2012.

\bibitem[Robbins and Monro(1951)]{RobbinsH1951A_SG}
H~Robbins and S~Monro.
\newblock A stochastic approximation method.
\newblock \emph{Ann. Math. Statist.}, 22:\penalty0 400--407, 1951.

\bibitem[Schmidt et~al.(2017)Schmidt, {Le Roux}, and
  Bach]{SchmidtM2017A_FiniteSumsSG}
M~Schmidt, N~{Le Roux}, and F~Bach.
\newblock Minimizing finite sums with the stochastic average gradient.
\newblock \emph{Math. Program.}, 162:\penalty0 83--112, 2017.

\bibitem[Shalev-Shwartz et~al.(2011)Shalev-Shwartz, Singer, Srebro, and
  Cotter]{ShwartzS2011A_subgradientSVM}
S~Shalev-Shwartz, Y~Singer, N~Srebro, and A~Cotter.
\newblock {PEGASOS}: Primal estimated sub-gradient solver for {SVM}.
\newblock \emph{Math. Program.}, 127:\penalty0 3--30, 2011.

\bibitem[Shapiro et~al.(2009)Shapiro, Dentcheva, and
  Ruszczy\'{n}ski]{ShapiroA2009M_StochasticProgramming}
A~Shapiro, D~Dentcheva, and A~Ruszczy\'{n}ski.
\newblock \emph{Lectures on Stochastic Programming: Modeling and Theory}.
\newblock MPS-SIAM Ser. Optim., SIAM, Philadelphia, PA, 2009.

\bibitem[Sutskever et~al.(2013)Sutskever, Martens, Dahl, and
  Hinton]{SutskeverI2013_DNN&SGM}
I~Sutskever, J~Martens, G~Dahl, and G~Hinton.
\newblock On the importance of initialization and momentum in deep learning.
\newblock In \emph{Proceedings of the 30th International Conference on
  International Conference on Machine Learning (ICML)}, volume~28, pages
  1139--1147, 2013.

\bibitem[Tricomi and Erd\'{e}lyi(1951)]{TricomiF1951A_GammaRatio}
F~G Tricomi and A~Erd\'{e}lyi.
\newblock The asymptotic expansion of a ratio of gamma functions.
\newblock \emph{Pacific Journal of Mathematics}, 1:\penalty0 133--142, 1951.

\bibitem[Tseng(1998)]{TsengP1988A_SGmomentum}
P~Tseng.
\newblock An incremental gradient(-projection) method with momentum term and
  adaptive stepsize rule.
\newblock \emph{SIAM Journal on Optimization}, 8(2):\penalty0 506--531, 1998.

\bibitem[Xiao(2010)]{XiaoL2010A_GradientAveraging}
L~Xiao.
\newblock Dual averaging methods for regularized stochastic learning and online
  optimization.
\newblock \emph{Journal of Machine Learning Research}, 11:\penalty0 2543--2596,
  2010.

\bibitem[Zhang(2004)]{ZhangT2004A_SG}
T~Zhang.
\newblock Solving large scale linear prediction problems using stochastic
  gradient descent algorithms.
\newblock In \emph{Proceedings of the Twenty-First International Conference on
  Machine Learning}, ACM Press, New York, 2004.

\bibitem[Zinkevich(2003)]{ZinkevichM2003A_SG}
M~Zinkevich.
\newblock Online convex programming and generalized infinitesimal gradient
  ascent.
\newblock In \emph{Proceedings of the Twentieth International Conference on
  Machine Learning}, pages 928--935, AAAI Press, Palo Alto, CA, 2003.

\end{thebibliography}

\end{document}